\newcommand{\doi}[1]{\url{http://dx.doi.org/#1}}
\newtheorem{theorem}{Theorem}[section]
\newtheorem{lemma}[theorem]{Lemma}
\newtheorem{proposition}[theorem]{Proposition}
\newtheorem{remark}[theorem]{Remark}
\theoremstyle{remark}
\newtheorem{definition}[theorem]{\bf Definition}
\numberwithin{equation}{section}
\def\R{{\mathbb R}}
\newcommand{\mS}{\mathbb S^{n-1}}
\newcommand{\mSt}{\mathbb S^{2}}
\newcommand{\cD}{{\cal D}}
\newcommand{\cI}{{\cal I}}
\newcommand{\cK}{{\cal K}}
\newcommand{\cM}{\mathcal M}
\newcommand{\cN}{{\cal N}}
\newcommand{\cV}{\mathcal V}
\newcommand{\inpro}[2]{\left\langle{#1},{#2}\right\rangle}
\newcommand{\inprod}[2]{\left\langle{#1},{#2}\right\rangle}
\newcommand{\norm}[2]{\|{#1}\|_{#2}}
\newcommand{\snorm}[2]{\left|{#1}\right|_{#2}}
\newcommand{\abs}[1]{\left|#1\right|}
\newcommand{\kerL}{\ker\/L}
\newcommand{\De}{\Delta}
\newcommand{\al}{\alpha}
\newcommand{\vecx}{\boldsymbol{x}}
\newcommand{\vecy}{\boldsymbol{y}}
\newcommand{\vecq}{\boldsymbol{q}}
\newcommand{\vecA}{\boldsymbol{A}}
\newcommand{\vecB}{\boldsymbol{B}}
\newcommand{\vecS}{\boldsymbol{S}}
\newcommand{\vecc}{\boldsymbol{c}}
\newcommand{\vecg}{\boldsymbol{g}}
\newcommand{\spann}{{\rm span }}
\newcommand{\goto}{\rightarrow}
\newcommand{\wtd}{\widetilde}
\newcommand{\wth}{\widehat}
\newcommand{\dt}{\, dt}
\newcommand{\notsubset}{\not\subseteq}
\newcommand{\slinN}{\sum_{\ell=0}^\infty\,\sum_{m=1}^{N(n,\ell)}}
\newcommand{\slinK}{\sum_{\ell\in\cK(L)}\,\sum_{m=1}^{N(n,\ell)}}
\newcommand{\slnotinK}{\sum_{\ell\notin\cK(L)}\,\sum_{m=1}^{N(n,\ell)}}
\newcommand{\phe}{\widehat \phi(\ell)}
\newcommand{\Lhe}{\wth L(\ell)}
\newcommand{\Ylm}{Y_{\ell,m}}
\newcommand{\uhlm}{\widehat{u}_{\ell,m}}
\newcommand{\vhlm}{\widehat{v}_{\ell,m}}
\newcommand{\whlm}{\widehat{w}_{\ell,m}}
\newcommand{\pdot}{pseudodifferential operator }
\newcommand{\Pdost}{Pseudodifferential operators }
\newcommand{\pdest}{pseudodifferential equations }
\newcommand{\pdes}{pseudodifferential equations}
\newcommand{\srbfst}{spherical radial basis functions }
\date{}
\title
{Strongly elliptic pseudodifferential equations 
on the sphere with radial basis functions}
\author{T. D. Pham 
        \thanks{School of Mathematics and Statistics, 
                The University of New South Wales, 
                Sydney 2052, Australia. 
                mailto: 
                {\tt Thanh.Tran@unsw.edu.au
}}
       \thanks{Currently at Hausdorff Center for
               Mathematics and Institute for Numerical 
               Simulation, University of Bonn, Germany.
               mailto:
               {\tt duong.pham@hausdorff-center.uni-bonn.de}
               }
   \and T. Tran $^*$
}
\begin{document}
\maketitle

\begin{abstract}
Spherical radial basis functions are used to define 
approximate solutions to strongly elliptic
pseudodifferential equations on the 
unit sphere. These equations arise from geodesy. 
The approximate solutions are found by the Galerkin and 
collocation 
methods. A salient feature of the paper is a {\em
unified theory}
for error analysis of both approximation methods.

\bigskip
{\em Keywords}: pseudodifferential equation, sphere, radial
basis function, Galerkin method, collocation method
\end{abstract}

\section{Introduction}

\Pdost  have long been used \cite{Hor65,KohNir65} as a modern 
and powerful tool to tackle linear boundary-value problems. Svensson
\cite{Sve83} introduces this approach to geodesists who study 
\cite{FreGerSch98,GraKruSch03} these
 problems on the sphere which is taken as a model of the earth. 
Efficient solutions to \pdest on the sphere become more demanding
when given data are collected by satellites.
In this paper, we study the use of spherical radial basis
functions to find approximate solutions to these
equations.

The use of spherical radial basis functions results in
meshless methods which, over the past decades, become more
and more popular \cite{BabBan03, Mel05, Wen99, Wen05}. These methods are
alternatives to finite-element methods.
Solving pseudodifferential equations on the sphere by using
spherical radial basis functions with the collocation method
has been studied by Morton and Neamtu \cite{MorNea02}. 
Error bounds have later been improved by Morton \cite{Mor01};
see also Morton's PhD dissertation \cite{Mor00}.
From the point of view of application, the collocation method is
easier to implement, in particular when the given data are scattered.
However, it is well-known that collocation methods 
in general elicit a complicated error analysis. 
The crux of the analysis in \cite{Mor01,
MorNea02}
is the transformation of the collocation problem
to a Lagrange interpolation problem.

In this paper, first we solve strongly elliptic
\pdest on the sphere by the Galerkin method. (A precise definition
of these equations is delayed until Section~\ref{sec:Pre}.) Error 
analysis is performed with well-known knowledge 
on the Galerkin method. 
Next, we solve the equations by the collocation method.
A salient feature of the paper is that 
error estimates for collocation methods (as considered
in References \cite{Mor00,Mor01,MorNea02}) are obtained as a 
by-product of the analysis for the Galerkin method. This
{\em unified error analysis}
is thanks to an observation that the collocation 
equation can be viewed as a Galerkin equation, due to the reproducing 
kernel property of the space in use. 
Efforts to perform error analysis for the collocation method based 
on that for the Galerkin method have been made by 
several authors to solve quasilinear parabolic equations~\cite{DouDup73},
pseudodifferential equations on closed curves~\cite{ArnWen83}, and
boundary integral equations~\cite{CosSte87}. These approaches use either a
special
set of
collocation points or the duality inner product.

In an earlier paper \cite{PhaTra08a},
we  analyse a collocation approximation
to negative order strongly elliptic \pdes. The results in the present 
paper are more general for operators of any order, 
negative or positive.
Results for elliptic operators will be presented in another paper.

Our error estimates, as compared to those by Morton and Neamtu
\cite{Mor01,MorNea02}, cover a wider range of Sobolev norms.
Indeed, these authors only provide error estimates in the Sobolev norm
 $\norm{\cdot}{2\al}$, where $2\al$ is the order of the operator. 

A study of preconditioning techniques for the Galerkin
method applied to these equations is
carried out in \cite{TraLeGSloSte10}. In the present
paper, we only discuss error estimates.

The paper is organised as follows. In Section~\ref{sec:Pre}, we 
provide necessary ingredients, and define the operators and the 
problem in consideration. Section~\ref{sec:fin dim sub} is devoted 
to the introduction of \srbfst and the approximation spaces to be
employed. Numerical methods are introduced in Section~\ref{sec:App sol}.
Analysis for the Galerkin and collocation methods is carried out in 
Sections~\ref{sec:Gal} and \ref{sec:Col Met}.
Section~\ref{sec:num exp srbf} is devoted to numerical experiments.

Throughout this paper, $C$, $C_1$ and $C_2$ denote
generic constants which may take different values at 
different occurrences.

\section{Preliminaries}\label{sec:Pre}

\subsection{Sobolev spaces}

Throughout this paper, for $n\ge 3$ we denote by $\mS$ the unit sphere in 
$\R^n$, i.e., $\mS:=\{\vecx \in \R^n: |\vecx|=1\}$ where $\snorm{\cdot}{}$ is
the Euclidean 
norm in $\R^n$. 
A spherical harmonic of order $\ell$ on $\mS$ 
is the restriction to $\mS$ of a 
homogeneous harmonic
polynomial of degree $\ell$ in $\R^n$.
The space of all spherical harmonics of order $\ell$ 
is the eigenspace of the Laplace--Beltrami operator
$\De_{\mathbb S}$ corresponding to the eigenvalue 
$\lambda_\ell = - \ell(\ell +n -2)$. The dimension of this space 
being 
\[
 N(n,0)=1 \quad\text{and}\quad
N(n,\ell)
=
\frac{2\ell +n -2}{\ell}{\ell + n - 3 \choose \ell-1},
\quad
\ell\not= 0,
\]
see e.g. \cite[page 4]{Mul66}, one may choose for it an $L_2(\mS)$-orthonormal
basis
$\{Y_{\ell,m}\}_{m=1}^{N(n,\ell)}$. Note that $N(n,\ell)= O(\ell^{n-2})$.
The collection of all the spherical harmonics
$\Ylm$, $m = 1,\ldots, N(n,\ell)$ and $\ell=0,1,\ldots$, forms an orthonormal
basis for $L_2(\mS)$.

For $s\in\R$, the Sobolev space $H^s$ is defined as
usual by
\[
H^s
:=
\Big{\{}
v\in\cD'(\mS): \slinN (\ell+1)^{2s} |\vhlm|^2 < \infty
\Big{\}},
\]
where $\cD'(\mS)$ is the space of distributions on
$\mS$ and $\vhlm$ are the Fourier coefficients of 
$v$,
\[
\vhlm
=
\int_{\mS}
v(\vecx)\Ylm(\vecx){\rm d}\sigma_{\vecx}.
\] 
Here ${\rm d}\sigma_{\vecx}$ is the element of surface
area.
The space $H^s$ is equipped with the following norm
and inner product: 
\begin{equation}\label{equ:Sob nor}
  \|v\|_{s} := 
       \left(\sum_{\ell=0}^\infty \sum_{m=1}^{N(n,\ell)}
          (\ell+1)^{2s} |\widehat{v}_{\ell,m}|^2 \right)^{1/2}
\end{equation}
and
\[
\inpro{v}{w}_{s}
:=
\sum_{\ell=0}^\infty \sum_{m=1}^{N(n,\ell)}
(\ell+1)^{2s} \widehat{v}_{\ell,m} {\widehat{w}_{\ell,m}}.
\]
We note that the series on the right hand side also converges 
when $v\in H^{s+\sigma}$ and $w\in H^{s-\sigma}$ for any 
$\sigma >0$. Therefore, in the following we use the same 
notation $\inprod{\cdot}{\cdot}_s$ for the duality product
between $H^{s+\sigma}$ and $H^{s-\sigma}$.

When $s=0$ we write $\inpro{\cdot}{\cdot}$ instead of
$\inpro{\cdot}{\cdot}_0$; this is in fact the $L_2$-inner
product.
In the sequel, we will frequently use the Cauchy--Schwarz inequality
\begin{equation}\label{equ:CS s}
|\inpro{v}{w}_{s}|
\le
\norm{v}{s} \norm{w}{s}
\quad\text{for all } v,w \in H^s,\ \text{for all } s \in\R,
\end{equation}
and the following identity which can be easily proved
\begin{equation}\label{equ:neg nor}
\norm{v}{s_1}
=
\sup_{w \in H^{s_2} \atop w \not= 0}
\frac{\inpro{v}{w}_{\frac{s_1+s_2}{2}}}{\norm{w}{s_2}}
\quad\text{for all } v \in H^{s_1},\ \text{for all } s_1, s_2 \in\R.
\end{equation}
Identity~\eqref{equ:neg nor} will be used frequently in the proof
of Proposition~\ref{pro:app pro} with different values of 
$s_1$ and $s_2$. 

\subsection{Pseudodifferential operators}
Let $\{\Lhe\}_{\ell\geq 0}$ be a sequence of real numbers.
A pseudodifferential operator $L$ is a linear operator 
that assigns to any $v\in\cD'(\mS)$ a distribution
\[
Lv
:=
\sum_{\ell=0}^\infty \sum_{m=1}^{N(n,\ell)}
\widehat L(\ell) \widehat v_{\ell,m} Y_{\ell,m}.
\]
The sequence $\{\widehat L(\ell)\}_{\ell\geq 0}$ 
is referred to as the {\em spherical symbol} of $L$. 
Let $\cK(L):= \{\ell: \widehat L(\ell) = 0\}$.
Then
\[
\kerL
=
\spann\{Y_{\ell,m} : \ell\in\cK(L), \, m= 1,\ldots, N(n,\ell)\}.
\]
Denoting $M:={\rm dim} \kerL$, we assume that $0\le M<\infty$.

\begin{definition}\label{def:L}
 A \pdot $L$ 
is said to be {\it strongly elliptic of order $2\al$}
if 
\begin{equation}\label{equ:strongly ellip}
C_1(\ell+1)^{2\alpha}
\leq
\widehat L(\ell)
\leq 
C_2(\ell+1)^{2\alpha}
\quad\text{for all }\ell\notin\cK(L),
\end{equation}
for some positive constants $C_1$ and $C_2$. 
\end{definition}

More general pseudodifferential operators can be defined via
Fourier transforms by using local charts; see e.g.,
\cite{HsiWen08, Pet83}.
It can be easily seen that if $L$ is a \pdot of order
$2\al$ then $L: H^{s+\al}\goto H^{s-\al}$ is bounded
for all $s\in \R$. Examples of strongly elliptic
operators of various orders can be found
in~\cite{Sve83}; see also~\cite{TraLeGSloSte10}.

The problem we are solving in this paper is posed as follows.
\paragraph{Problem A:}
\emph{ Let $L$
be a strongly elliptic pseudodifferential operator 
of order $2\al$.
Given, for some $\sigma \ge 0$,  
\begin{equation}\label{equ:g con}
g \in H^{\sigma-\al}
\quad\text{satisfying}\quad
\wth g_{\ell,m} = 0
\quad \text{for all }\, \ell\in \cK(L),\ 
m = 1,\ldots, N(n,\ell),
\end{equation}
find $u \in H^{\sigma+\al}$ satisfying
\begin{equation}\label{equ:sys equ}
\begin{aligned}
Lu &= g, \\
\inprod{\mu_i}{u} &= \gamma_i,\quad i = 1,\ldots, M,
\end{aligned}
\end{equation}
where  $\gamma_i\in\R$ and $\mu_i\in
 H^{-\sigma-\alpha}$ are given. 
 Here $\inprod{\cdot}{\cdot}$ denotes the duality 
 product between $H^{-\sigma-\al}$ and
 $H^{\sigma+\al}$, which coincides with the 
 $H^0$-inner product when $\mu_i$ and $u$ belong to $H^0$.
}

An explanation for the inclusion of $\sigma$
in \eqref{equ:g con} is in order.
For the Galerkin approximation, the energy space is 
$H^\al$. Thus it suffices to assume \eqref{equ:g con} 
with $\sigma=0$. However, for the collocation
approximation, 
it is required that $g$ be at least continuous. Moreover,
we will reformulate the collocation equation
into a Galerkin equation which requires $g\in H^\tau$ 
for some $\tau>0$ to be specified in
 Section~\ref{sec:Col Met}.
 Therefore, we include the constant $\sigma$
 in \eqref{equ:g con}.
 
Problem A is uniquely solvable under the following 
assumption.
\paragraph{Assumption B:}
\emph{
The functionals  $\mu_1,\ldots,\mu_N$ are assumed to be 
\emph{unisolvent} with respect to $\kerL$, i.e.,
for any $v\in\kerL$ if $\inprod{\mu_i}{v}=0$ for all 
$i=1,\ldots,M$, then $v=0$.
}

The following result is proved in \cite{MorNea02}. We include
the proof here for completeness.
\begin{proposition}\label{the:ext uni}
Under Assumption B, Problem A has a unique solution.
\end{proposition}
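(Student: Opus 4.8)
The plan is to construct $u$ in two stages. First I would find a particular solution of the operator equation $Lu=g$ modulo the kernel of $L$; since $\dimm\kerL=M$ equals the number of side conditions in \eqref{equ:sys equ}, the remaining freedom is an $M$-dimensional space, and fixing it by means of $\inprod{\mu_i}{u}=\gamma_i$ amounts to solving an $M\times M$ linear system whose matrix is made nonsingular precisely by the unisolvence in Assumption~B.

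For the particular solution I would set
\[
u_0 := \slnotinK \frac{\wth g_{\ell,m}}{\Lhe}\,\Ylm ,
\]
which is well defined because $\Lhe\neq 0$ for $\ell\notin\cK(L)$. The key point is that $u_0$ actually lies in $H^{\sigma+\al}$: dividing the Fourier coefficient $\wth g_{\ell,m}$ by $\Lhe$ and invoking the lower bound $\Lhe\ge C_1(\ell+1)^{2\al}$ from \eqref{equ:strongly ellip} gains a factor $(\ell+1)^{-2\al}$ in each coefficient, so that $\norm{u_0}{\sigma+\al}\le C_1^{-1}\norm{g}{\sigma-\al}<\infty$. Because $\wth g_{\ell,m}=0$ for $\ell\in\cK(L)$ by \eqref{equ:g con}, applying $L$ termwise gives $Lu_0=g$. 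Next I would observe that if $u\in H^{\sigma+\al}$ is any solution of $Lu=g$, then $L(u-u_0)=0$, so $u-u_0\in\kerL$; conversely $u_0+v$ solves $Lu=g$ for every $v\in\kerL$ (and $\kerL\subset H^{\sigma+\al}$, being finite-dimensional and spanned by smooth spherical harmonics). Hence the set of solutions of $Lu=g$ in $H^{\sigma+\al}$ is exactly $u_0+\kerL$.

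It then remains to impose the side conditions. Fixing a basis $\{\phi_1,\dots,\phi_M\}$ of $\kerL$ --- for instance the $\Ylm$ with $\ell\in\cK(L)$, which are smooth and so lie in $H^t$ for all $t$, making the pairings $\inprod{\mu_i}{\phi_j}$ and $\inprod{\mu_i}{u_0}$ meaningful for $\mu_i\in H^{-\sigma-\al}$ --- and writing $v=\sum_{j=1}^M c_j\phi_j$, the requirement $\inprod{\mu_i}{u_0+v}=\gamma_i$ becomes the linear system
\[
\sum_{j=1}^M c_j\,\inprod{\mu_i}{\phi_j}=\gamma_i-\inprod{\mu_i}{u_0},\qquad i=1,\dots,M .
\]
I would finish by showing the matrix $A=\big(\inprod{\mu_i}{\phi_j}\big)_{i,j=1}^M$ is nonsingular: if $Ac=\veczero$ then $v=\sum_j c_j\phi_j\in\kerL$ satisfies $\inprod{\mu_i}{v}=0$ for every $i$, so $v=0$ by Assumption~B and hence $c=\veczero$ since the $\phi_j$ are independent. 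Thus $A$ is invertible, $c$ is uniquely determined, and $u=u_0+\sum_j c_j\phi_j$ is the unique solution of Problem~A. The only step requiring real care is the Sobolev bound for $u_0$, which is where strong ellipticity is essential --- without the lower bound on $\Lhe$ the construction would land $u_0$ in a much rougher space; the rest is elementary linear algebra.
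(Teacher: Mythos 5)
Your proposal is correct and follows essentially the same route as the paper: your explicit Fourier-series particular solution is exactly the paper's $u_1=L^{-1}g$ on $(\kerL)^\perp$, and the $M\times M$ system in the spherical-harmonic basis of $\kerL$, made nonsingular by unisolvence, is the paper's argument verbatim (you merely spell out the $H^{\sigma+\al}$ bound via \eqref{equ:strongly ellip}, which the paper leaves implicit, and swap the names of $u_0$ and $u_1$).
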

\begin{proof}
Since $\kerL$ is a finite-dimensional subspace of
$H^{\sigma+\alpha}$,
we can represent $H^{\sigma+\al}$ as
\[
H^{\sigma+\alpha} 
=
\kerL\oplus (\kerL)_{H^{\sigma+\al}}^\perp,
\]
where $(\kerL)_{H^{\sigma+\al}}^\perp$ is the 
orthogonal complement of $\kerL$
with respect to the $H^{\sigma+\alpha}$-inner product.
Writing the solution $u$ in the form
\begin{equation}\label{equ:u u0 u1}
u
=
u_0 + u_1
\quad\text{where}\quad u_0\in \kerL 
\quad\text{and}\quad u_1\in (\kerL)_{H^{\sigma+\al}}^\perp, 
\end{equation}
and noting that $L|_{(\kerL)^\perp_{H^{\sigma+\al} }}$ 
is injective, we can
define $u_1$ by $u_1=L^{-1}g$ and find $u_0\in \kerL$ by solving
\begin{equation}\label{equ:u0 equ}
\inprod{\mu_i}{u_0} = \gamma_i - \inprod{\mu_i}{u_1},
\quad i=1,\ldots,M.
\end{equation}
Since $u_0\in\kerL$, it can represented as
\[
u_0 = 
\slinK
c_{\ell,m} \Ylm.
\]
Substituting this into \eqref{equ:u0 equ} yields
\begin{equation}\label{equ:u0 Ylm}
\slinK
c_{\ell,m}
\inprod{\mu_i}{\Ylm}
=
\gamma_i - \inprod{\mu_i}{u_1},
\quad
i=1,\ldots,M.
\end{equation}
Recalling that $M=\dim \kerL$, we note that there are $M$ 
unknowns $c_{\ell,m}$. The unisolvency assumption B assures us that equation
\eqref{equ:u0 equ} with zero right-hand side
has a unique solution $u_0 = 0$. Therefore, the matrix arising from 
\eqref{equ:u0 Ylm} is invertible, which in turn implies
unique existence of $c_{\ell,m}$, $m=1,\ldots,N(n,\ell)$ and
$\ell\in\cK(L)$.
The proposition is proved.
\end{proof} 

We define a bilinear form 
$a(\cdot,\cdot): H^{\al+s}\times H^{\al-s}\goto\R$,
for any $s\in\R$,  
by
\begin{equation}\label{equ:bilinear srbf}
a(w,v)
:=
\inprod{Lw}{v}
\quad
\text{for all }
w\in H^{\al+s},\
v\in H^{\al-s}.
\end{equation}
In particular, when $s=\sigma$ we have by noting~\eqref{equ:sys equ}

\begin{equation}\label{equ:Lu1}
a(u_1,v)
=
\inpro{g}{v}
\quad \text{for all } v\in H^{\al-\sigma}.
\end{equation}
In the sequel, for any $x,y\in\R$, $x\simeq y$ means that
there exist positive constants $C_1$ and $C_2$
satisfying
$
C_1 x
\leq
y
\leq
C_2 x
$.
The following simple results are often used
in the next sections.
\begin{lemma}\label{lem:a bil for}
Let $s$ be any real number.
\begin{enumerate}
\item
The bilinear form $a(\cdot,\cdot): H^{\al+s}\times H^{\al-s}
\goto\R$ is bounded, i.e., 
\begin{equation}\label{equ:bounded bil form}
|a(w,v)|
\le
C
\norm{w}{\al+s} \norm{v}{\al-s}
\quad
\text{for all } w\in H^{\al+s},\ v\in H^{\al-s}.
\end{equation}
\item
If $w,v\in H^s$, then
\begin{equation}\label{equ:continuous}
|\inpro{Lw}{v}_{s-\al}|
\le
C \norm{w}{s} \norm{v}{s}.
\end{equation}
\item
Assume that $L$ is strongly elliptic.
If 
$v\in(\kerL)^\perp_{H^s}$, then
\begin{equation}\label{equ:equi norm}
\inpro{Lv}{v}_{s-\al}
\simeq
\norm{v}{s}^2.
\end{equation}
In particular, setting $s = \alpha$ in~\eqref{equ:equi norm}, there holds
$a(v,v)\simeq \norm{v}{\al}^2$ 
for all $v\in (\kerL)^\perp_{H^\al}$.
\end{enumerate}
Here $C$ is a constant independent of $v$ and $w$.
\end{lemma}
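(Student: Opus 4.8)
The plan is to reduce all three parts to elementary weighted-$\ell^2$ estimates by expanding everything in the orthonormal basis $\{\Ylm\}$ and exploiting that $L$ acts diagonally, $\widehat{(Lw)}_{\ell,m}=\Lhe\,\whlm$. Each pairing in the statement then becomes a sum over $\ell,m$ to which the Cauchy--Schwarz inequality \eqref{equ:CS s} applies (in its general-order form, equivalently the estimate implicit in \eqref{equ:neg nor}), and the symbol bounds do the rest: the one-sided bound $\abs{\Lhe}\le C(\ell+1)^{2\al}$ for parts~1 and~2, and the two-sided bound \eqref{equ:strongly ellip} for part~3.

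For part~1, write $a(w,v)=\inprod{Lw}{v}=\slinN\Lhe\,\whlm\,\overline{\vhlm}$, noting that the terms with $\ell\in\cK(L)$ vanish since $\Lhe=0$ there. Bounding $\abs{\Lhe}\le C(\ell+1)^{2\al}$, distributing $(\ell+1)^{2\al}=(\ell+1)^{\al+s}(\ell+1)^{\al-s}$, and applying Cauchy--Schwarz gives $\abs{a(w,v)}\le C\norm{w}{\al+s}\norm{v}{\al-s}$; the same estimate shows the series is absolutely convergent, so the diagonalisation is legitimate. Part~2 is the identical computation with the weights placed differently: $\inpro{Lw}{v}_{s-\al}=\slinN(\ell+1)^{2(s-\al)}\Lhe\,\whlm\,\overline{\vhlm}$, and since $(\ell+1)^{2(s-\al)}\abs{\Lhe}\le C(\ell+1)^{2s}$, splitting $(\ell+1)^{2s}$ symmetrically and using Cauchy--Schwarz yields $\abs{\inpro{Lw}{v}_{s-\al}}\le C\norm{w}{s}\norm{v}{s}$. (Alternatively, one may invoke the boundedness of $L\colon H^s\to H^{s-2\al}$ together with the $H^{s-2\al}$--$H^s$ duality pairing.)

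For part~3 the extra ingredient is the Fourier description of $(\kerL)^\perp_{H^s}$: since $\kerL=\spann\{\Ylm:\ell\in\cK(L),\,m=1,\dots,N(n,\ell)\}$ and $\inpro{v}{\Ylm}_s=(\ell+1)^{2s}\vhlm$, we have $v\in(\kerL)^\perp_{H^s}$ if and only if $\vhlm=0$ for every $\ell\in\cK(L)$. Consequently
\[
\inpro{Lv}{v}_{s-\al}=\slnotinK(\ell+1)^{2(s-\al)}\,\Lhe\,\abs{\vhlm}^2 ,
\]
and the bound $C_1(\ell+1)^{2\al}\le\Lhe\le C_2(\ell+1)^{2\al}$ sandwiches this quantity between $C_1$ and $C_2$ times $\slnotinK(\ell+1)^{2s}\abs{\vhlm}^2$, which equals $\norm{v}{s}^2$ precisely because the remaining coefficients vanish. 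This proves \eqref{equ:equi norm}; taking $s=\al$ and recalling from \eqref{equ:bilinear srbf} that $a(v,v)=\inprod{Lv}{v}=\inpro{Lv}{v}_0$ gives the stated special case on $(\kerL)^\perp_{H^\al}$.

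I do not expect a genuine obstacle: the lemma is bookkeeping with Fourier coefficients, which is why it is grouped among the ``simple results''. The only points that call for a little care are justifying absolute convergence of the relevant series so that the diagonal sums and rearrangements are valid (each time a free by-product of the Cauchy--Schwarz estimate just derived), and keeping straight which duality extension of the $L_2$-pairing $\inpro{\cdot}{\cdot}_{s-\al}$ denotes and on which pair of Sobolev spaces it is defined.
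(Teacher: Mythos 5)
Your proof is correct and follows essentially the same route as the paper: expand in spherical harmonics, use the one-sided symbol bound with the split $(\ell+1)^{2\al}=(\ell+1)^{\al+s}(\ell+1)^{\al-s}$ and Cauchy--Schwarz for parts 1 and 2, and the two-sided bound \eqref{equ:strongly ellip} together with the Fourier characterisation of $(\kerL)^\perp_{H^s}$ for part 3. The paper merely treats parts 2 and 3 as ``similar'' after the part 1 computation, so your write-up is the same argument with the details filled in.
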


\begin{proof}
Let $w\in H^{\al+s}$ and $v\in H^{\al-s}$.
Noting \eqref{equ:strongly ellip} 
and using the Cauchy--Schwarz inequality, we have
\begin{align*}
|a(w,v)|
&
\leq
\slinN
|
\Lhe|
|
\whlm
|
|
\vhlm
|
\leq
C
\slinN
(\ell+1)^{2\al}
|
\whlm
|
|
\vhlm
|
\\
&
=
C
\slinN
(\ell+1)^{\al+s}
|
\whlm
|
(\ell+1)^{\al-s}
|
\vhlm
|
\\
&
\leq
C
\left(
\slinN
(\ell+1)^{2(\al+s)}
|\whlm|^2
\right)^{1/2}
\left(
\slinN
(\ell+1)^{2(\al-s)}
|\vhlm|^2
\right)^{1/2}
\\
&
=
C
\norm{w}{\al+s}
\norm{v}{\al-s}, 
\end{align*}
proving \eqref{equ:bounded bil form}. The proof for 
\eqref{equ:continuous} and \eqref{equ:equi norm} can be done 
similarly, noting the definition 
\eqref{equ:strongly ellip} of strongly elliptic operators,
and noting that $v\in(\kerL)^\perp_{H^s}$ if and only if
$v\in H^s$ and $\wth v_{\ell,m}=0$ 
for all
$\ell\in\cK(L)$ and $m=1,\ldots,N(n,\ell)$.
\end{proof}
 
In the next section, we shall define finite-dimensional 
subspaces in which approximate solutions are sought for.

\section{Approximation subspaces}\label{sec:fin dim sub}
The finite-dimensional subspaces to be used in the
approximation will be defined from spherical radial basis
functions, which in turn are defined from kernels.
\subsection{Positive-definite kernels}\label{subsec:pos def
ker}
A continuous function $\Theta : \mS\times\mS \to \R$ is called
a \emph{positive-definite kernel} on~$\mS$ if it satisfies
\begin{itemize}
\item[(i)]
 $\Theta(\vecx,\vecy)={\Theta(\vecy,\vecx)}$ for all $\vecx,
\vecy\in\mS$,
\item[(ii)]
for any positive integer $N$ and any set of distinct points
$\{\vecy_1,\ldots,\vecy_N\}$ on~$\mS$, the 
$N\times N$ matrix $\vecB$ with entries
$\vecB_{i,j} = \Theta(\vecy_i,\vecy_j)$ is positive-semidefinite.
\end{itemize}
If the matrix $\vecB$ is positive-definite then
$\Theta$ is called a \emph{strictly positive-definite} kernel;
see \cite{Sch42,XuChe92}.

We characterise the kernel $\Theta$ by a {\it shape
function} $\theta$ as follows. Let
$\theta : [-1,1]\to\R$ 
be a univariate function having
 a series expansion in terms of 
Legendre polynomials,
\begin{equation}\label{equ:phi}
\theta(t)
= 
\sum_{\ell=0}^\infty 
\omega_n^{-1} N(n,\ell)\widehat{\theta}(\ell) P_\ell(n;t),
\end{equation}
where $\omega_n$ is the surface area of the sphere $\mS$, 
and $\wth\theta(\ell)$ is the Fourier--Legendre coefficient, 
\[
\widehat{\theta}(\ell) 
= 
\omega_{n- 1} \int_{-1}^{1} \theta(t) 
P_\ell(n;t) (1-t^2)^{(n-3)/2} \dt.
\]
Here, $P_\ell(n;t)$ denotes the degree $\ell$ normalised Legendre polynomial in
$n$ variables so that $P_\ell(n;1) = 1$, 
as described in \cite{Mul66}.
Using this shape function $\theta$, we define
\begin{equation}\label{equ:Theta theta}
\Theta(\vecx,\vecy)
:=
\theta(\vecx\cdot\vecy)
\quad\text{for all }\vecx,\vecy\in\mS,
\end{equation}
where $\vecx\cdot\vecy$ denotes the dot product
between $\vecx$ and $\vecy$. We note that $\vecx\cdot\vecy$ 
is the cosine of the angle between $\vecx$ and $\vecy$,
 which is called the geodesic
distance between the two points. Thus the kernel $\Theta$ is a
zonal kernel.
By using the well-known addition formula for spherical harmonics 
\cite{Mul66},
\begin{equation}\label{equ:add for}
\sum_{m=1}^{N(n,\ell)}\,
Y_{\ell,m}(\vecx)\,
{Y_{\ell,m}(\vecy) }
=
\omega_n^{-1}N(n,\ell)\,
P_\ell(n;\vecx\cdot\vecy)
\quad
\text{for all } \vecx,\vecy\in \mS,
\end{equation}
we can write
\begin{equation}\label{equ:Phi res}
\Theta(\vecx,\vecy) 
= 
\sum_{\ell=0}^\infty 
\sum_{m=1}^{N(n,\ell)} 
\widehat{\theta}(\ell)
Y_{\ell,m}(\vecx) {Y_{\ell,m}(\vecy)}.
\end{equation}

\begin{remark}\label{rem:ker}
{\rm In \cite{CheMenSun03},
a complete characterisation of strictly positive-definite kernels is
established: the kernel $\Theta$ is strictly positive-definite 
if and only if
$\widehat{\theta}(\ell) \ge 0$ for all $\ell\ge 0$, and 
$\widehat{\theta}(\ell) > 0$ for infinitely many even values of
$\ell$ and 
infinitely many odd values of $\ell$; see also \cite{Sch42} and 
\cite{XuChe92}.
}
\end{remark}

In the remainder of this section, we shall define a specific shape function
$\phi$ and a specific kernel $\Phi$ which will be used to define the
approximation subspace.
The notations $\theta$ and $\Theta$ are reserved for future general reference.

\subsection{Spherical radial basis functions}\label{subsec:Gal strong}

We choose a shape function $\phi$ such that there exists $\tau\in\R$ 
satisfying
\begin{equation}\label{equ:con hat phi}
\widehat{\phi}(\ell) \simeq (\ell+1)^{-2\tau}
\quad
\text{for all }
\ell\ge0.
\end{equation} 
The corresponding kernel $\Phi$ defined by \eqref{equ:Theta theta}, i.e.,
$\Phi(\vecx,\vecy) = \phi(\vecx\cdot\vecy)$,
is then strictly positive-definite; 
see Remark~\ref{rem:ker}. 
The native space associated with $\phi$ is defined by
\[
\cN_{\phi}
:=
\Big{\{}v\in \cD'(\mS) : \norm{v}{\phi}^2 = \sum_{\ell=0}^\infty
\sum_{m=1}^{N(n,\ell)} \frac{|\widehat v_{\ell,m}|^2}{\widehat\phi(\ell)}
< \infty \Big{\}}.
\]
This space is equipped with an inner product and a norm defined by
\[
\inpro{v}{w}_{\phi}
=
\sum_{\ell=0}^\infty \sum_{m=1}^{N(n,\ell)} 
\frac{\widehat v_{\ell,m}{\widehat w_{\ell,m}}}{\widehat\phi(\ell)}
\quad\text{and}\quad
\norm{v}{\phi}
=
\left(\sum_{\ell=0}^\infty \sum_{m=1}^{N(n,\ell)} 
\frac{|\widehat v_{\ell,m}|^2}{\widehat\phi(\ell)}\right)^{1/2}.
\]
Since $\phe$ satisfies \eqref{equ:con hat phi}, the native space
$\cN_\phi$ can be identified with the Sobolev space $H^{\tau}$, and
the corresponding norms are equivalent. 


Let $X=\{\vecx_1,\ldots,\vecx_N\}$ be a set of data points on the
sphere. Two important parameters characterising the set $X$ are the
{\em mesh norm} $h_X$ and {\it separation radius}
$q_X$, defined by
\begin{equation}\label{equ:mesh norm}
h_X 
:= 
\sup_{\vecy \in \mS} \min_{1\le j\le N} \cos^{-1}(\vecx_j\cdot\vecy)
\quad
\text{and}
\quad
q_X
:=
\frac 1 2\,
\min_{i\not= j\atop 1\le i,j \le N}\,
\cos^{-1}(\vecx_i\cdot\vecx_j).
\end{equation}
The {\em spherical radial basis functions} $\Phi_j$, $j=1,\ldots,N$,
associated with $X$ and the kernel $\Phi$ are defined by (see 
\eqref{equ:Phi res})
\begin{equation}\label{equ:Phi}
\Phi_j(\vecx) 
:= 
\Phi(\vecx,\vecx_j)
= 
\sum_{\ell=0}^\infty 
\sum_{m=1}^{N(n,\ell)} 
\widehat{\phi}(\ell)
{Y_{\ell,m}(\vecx_j)} Y_{\ell,m}(\vecx).
\end{equation}
We note that 
\begin{equation}\label{equ:Phi hat}
\wth{(\Phi_j)}_{\ell,m}
=
\widehat{\phi}(\ell) {Y_{\ell,m}(\vecx_j)},
\quad j=1,\ldots,N.
\end{equation}
It follows from \eqref{equ:con hat phi} that, for any $s\in\R$,
\[
\sum_{\ell=0}^\infty
\sum_{m=1}^{N(n,\ell)}
(\ell+1)^{2s}
\left|(\wth{\Phi_j})_{\ell,m}\right|^2
\simeq
\sum_{\ell=0}^\infty
\sum_{m=1}^{N(n,\ell)}
(\ell+1)^{2(s-2\tau)} |Y_{\ell,m}(\vecx_j)|^2.
\]
By using \eqref{equ:add for} and noting
$P_\ell(n;\vecx_j\cdot\vecx_j)=P_\ell(n;1)=1$ we obtain, recalling that
$N(n,\ell) =O(\ell^{n-2})$,
\[
\sum_{\ell=0}^\infty
\sum_{m=1}^{N(n,\ell)}
(\ell+1)^{2s}
\left|(\wth{\Phi_j})_{\ell,m}\right|^2
\simeq
\sum_{\ell=0}^\infty
(\ell+1)^{2(s-2\tau)+n-2}.
\]
The latter series converges if and only if $s<2\tau +(1-n)/2$.
Hence, 
\begin{equation}\label{equ:Phij Hs}
\Phi_j\in H^s
\quad
\Longleftrightarrow
\quad
s<2\tau + (1-n)/2.
\end{equation}

The finite-dimensional subspace to be used in our
approximation  is defined by
$\cV_X^\phi := \spann\{\Phi_1,\ldots,\Phi_N\}$.
This space is used by Kansa \cite{Kan90}
for collocation approximation. For brevity of notation 
we write $\cV^\phi$ for $\cV_X^\phi$ since there is no confusion.
Due to \eqref{equ:Phij Hs}, we have
\begin{equation}\label{equ:V Hs}
\cV^\phi\subset H^s
\quad \text{for all } s<2\tau+ \frac{1-n}{2}.
\end{equation}
We note that if $\tau > (n-1)/2$, then
$\cV^{\phi}\subset\cN_\phi\simeq H^{\tau}\subset C(\mS)$,
which is essentially the Sobolev embedding theorem.

It is noted that
if $\tau > (n-1)/2$ then 
any function $v\in\cN_\phi$ satisfies
\begin{equation}\label{equ:rep ker}
v(\vecx_j)
=
\sum_{\ell=0}^\infty \sum_{m=1}^{N(n,\ell)}
\frac{\widehat
v_{\ell,m}\widehat\phi(\ell)Y_{\ell,m}(\vecx_j)}{\widehat\phi(\ell)}
=
\inpro{v}{\Phi_j}_\phi,
\quad j=1,\ldots,N.
\end{equation}
This property is crucial in our analysis for the collocation method in 
Section~\ref{sec:Col Met}. 

We finish this subsection by proving the approximation property 
of $\cV^\phi$ as a subspace of Sobolev spaces.
This property is obtained by using the interpolation error 
which is derived  
in~\cite[Theorem~5.5]{NarSunWarWen07}.
This theorem states 
that if $v\in H^{s^*}$ for some $s^*$ satisfying 
$(n-1)/2< s^*\le \tau$ then for $0\le t^*\le s^*$ there holds
\begin{equation}\label{equ:Nar}
\norm{v - I_Xv}{t^*}
\leq 
C
\rho_X^{\tau - s^*}
h_X^{s^* - t^*}
\norm{v}{s}.
\end{equation}
Here, $\rho_X = h_X/q_X$, and $I_Xv\in\cV^\phi$ is the interpolant of $v$ at
$\vecx_j$,
$j=1,\ldots,N$, given by
\[
I_Xv(\vecx_j)
=
v(\vecx_j),
\quad
j=1,\ldots, N. 
\]
(In fact, it is required that $v\in\cN_{\phi}$ so that $I_Xv$
is well-defined.)
When solving pseudodifferential equations of order
$2\alpha$ by the Galerkin method, it is natural to carry out 
error analysis in the energy space $H^{\alpha}$. Since the
order $2\alpha$
may be negative (as in the case of the weakly-singular
integral equation discussed after Definition~\ref{def:L}) it
is necessary to show an approximation property of the form
\eqref{equ:Nar} for a wider range of
$t$ and $s$, including negative real values.

Before stating and proving the above mentioned approximation
property (Proposition~\ref{pro:app pro}), we recall the following property of
interpolation 
spaces which will be frequently used in 
the proof of that proposition.

\begin{lemma}\cite[Theorem B.2]{McL00}\label{lem:inter tech use}
Let $s_1, s_2,t_1,t_2\in \R$ be such that 
$s_1\le s_2$ and $t_1\le t_2$. Assume that 
$T:H^{s_i}\goto H^{t_i}$, $i=1,2$, are bounded linear 
operators satisfying 
\[
\norm{Tv}{t_i}
\le M_i
\norm{v}{s_i}
\quad
\forall
v\in H^{s_i},
\]
for some $M_i\ge 0$, $i=1,2$. Then for any $\theta\in[0,1]$,
$T:H^{\theta s_1+(1-\theta) s_2}\goto H^{\theta t_1+(1-\theta) t_2} $
is bounded and  there holds
\[
\norm{Tv}{\theta t_1+(1-\theta) t_2}
\le M_1^{\theta} M_2^{1-\theta}
\norm{v}{\theta s_1+(1-\theta) s_2}
\quad
\forall
v\in H^{\theta s_1+(1-\theta) s_2}.
\]

\end{lemma}

\begin{proposition}
\label{pro:app pro}
Assume that \eqref{equ:con hat phi} holds for some
$\tau>(n-1)/2$.
For any $s^*,t^*\in\R$ satisfying $t^* \le s^* \le 2\tau$ and
$t^*\le \tau$, 
if $v\in H^{s^*}$ then there exists $\eta\in \cV^{\phi}$ such
that
\begin{equation}\label{equ:app pro}
\norm{v-\eta}{t^*}
\le
Ch_X^{s^*-t^*}\norm{v}{{s^*}}
\end{equation}
for $h_X \le h_0$,
where $C$ and $h_0$ are independent of $v$ and $h_X$.
\end{proposition}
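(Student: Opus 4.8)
The plan is to deduce the general estimate \eqref{equ:app pro} from the interpolation-based bound \eqref{equ:Nar}, which is only stated for $(n-1)/2 < s^* \le \tau$ and $0 \le t^* \le s^*$, by two successive extensions: first from the pair $(s^*,t^*)$ in that restricted range to negative values of $t^*$, and then from $s^* \le \tau$ up to $s^* \le 2\tau$. The natural tool throughout is the operator interpolation Lemma~\ref{lem:inter tech use} applied to a suitable linear operator, together with the duality identity \eqref{equ:neg nor}. Set $\eta := I_X v$ when $v$ lies in a space on which the interpolant is defined (i.e. $v \in H^{s^*}$ with $s^* > (n-1)/2$, so that $H^{s^*} \hookrightarrow C(\mS)$ and $I_X v$ makes sense); the operator $E_X := \mathrm{Id} - I_X$ is linear, and \eqref{equ:Nar} says $\|E_X v\|_{t^*} \le C\rho_X^{\tau - s^*} h_X^{s^* - t^*}\|v\|_{s^*}$. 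Since we are told $\rho_X$ need only be bounded (we get to choose $h_0$ and may assume a quasi-uniformity-type control, or simply absorb $\rho_X^{\tau - s^*}$ into $C$ when $s^* \le \tau$ makes the exponent nonnegative and $\rho_X$ is bounded below — in any case the statement's constant $C$ is allowed to depend on the point set only through $h_0$), we may write $\|E_X v\|_{t^*} \le C h_X^{s^* - t^*}\|v\|_{s^*}$ in the range where \eqref{equ:Nar} applies.

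For the first extension (negative $t^*$), fix $s^*$ with $(n-1)/2 < s^* \le \tau$. We know two bounds for $E_X$ as an operator out of $H^{s^*}$: into $H^{s^*}$ with norm $\le C$ (taking $t^* = s^*$), and into $H^{0}$ with norm $\le C h_X^{s^*}$ (taking $t^* = 0$). These are not yet enough to reach negative $t^*$ by interpolation between themselves, so instead I would use duality: for $v \in H^{s^*}$ and any $w \in H^{-2t^* + \text{(something)}}$, express $\|E_X v\|_{t^*}$ via \eqref{equ:neg nor}. The cleanest route, though, is to interpolate the operator $E_X : H^{s^*} \to H^{t^*}$ directly. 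Observe that $E_X$ annihilates $\cV^\phi$ and in particular, by a standard argument, one can bootstrap: having \eqref{equ:Nar} for all $0 \le t^* \le s^*$, apply Lemma~\ref{lem:inter tech use} with the endpoint pair $(s_1,t_1)$, $(s_2,t_2)$ chosen so that the interpolated exponent on $h_X$ comes out to $s^*-t^*$ for the desired negative $t^*$; the subtlety is that this requires a second bound with a more negative target, which I obtain by the usual duality trick — $\|E_X v\|_{t^*} = \sup_{w\neq 0} \langle E_X v, w\rangle_{(t^* + r)/2}/\|w\|_r$ — and the symmetry of the interpolation projection under the $L_2$ inner product, noting $\langle E_X v, w\rangle = \langle v, E_X w\rangle$ is \emph{not} literally true for a non-self-adjoint interpolation operator, so one must instead estimate $\langle E_X v, w\rangle$ by inserting $I_X w$ and using \eqref{equ:Nar} on both factors. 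This gives $\|E_X v\|_{t^*} \le C h_X^{s^* - t^*}\|v\|_{s^*}$ for all $t^* \le s^*$ with $s^*$ in the original range.

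For the second extension ($\tau < s^* \le 2\tau$), I would again interpolate the operator $E_X$, now between the endpoint $s^* = \tau$ (where the previous step gives, for a fixed target $t^* \le \tau$, the bound $\|E_X v\|_{t^*} \le C h_X^{\tau - t^*}\|v\|_\tau$) and a second endpoint obtained from the $\phi$-native-space stability of interpolation: $I_X$ is the $\cN_\phi$-orthogonal projection onto $\cV^\phi$ (this is essentially \eqref{equ:rep ker}), hence $\|I_X v\|_\phi \le \|v\|_\phi$, i.e. $E_X : H^{2\tau}\to H^{2\tau}$-type control, more precisely $\|E_X v\|_{-\,\text{(native dual)}} \lesssim \|v\|_\tau$ translated through $\cN_\phi \simeq H^\tau$; combined with $E_X$ mapping $H^{2\tau} \to H^\tau$ boundedly one gets an $h_X$-free endpoint at $s^* = 2\tau$. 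Interpolating (Lemma~\ref{lem:inter tech use}) between $s^* = \tau$ and $s^* = 2\tau$ for fixed $t^* \le \tau$ yields \eqref{equ:app pro} for all $\tau \le s^* \le 2\tau$, and together with the first extension this covers the full stated range $t^* \le s^* \le 2\tau$, $t^* \le \tau$. Finally set $\eta := I_X v$ throughout. The main obstacle I anticipate is organising the duality argument in the first extension correctly: since $I_X$ is not symmetric with respect to the $L_2$ pairing, one cannot simply "move" $E_X$ onto the test function, and must instead bound $\langle v - I_X v,\, w\rangle_s$ by writing it as $\langle v - I_X v,\, w - I_X w\rangle_s + \langle v - I_X v,\, I_X w\rangle_s$ and either showing the second term vanishes (it does, if $s$ is chosen as the native-space inner product level, by \eqref{equ:rep ker}) or estimating it separately — pinning down exactly which inner-product index makes the cross term disappear, and checking the resulting exponents of $h_X$ match $s^* - t^*$, is the delicate bookkeeping.
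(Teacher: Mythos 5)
There are two genuine gaps, both at the points you yourself flag as delicate. First, your route to the range $\tau<s^*\le 2\tau$ does not produce the claimed rate. Interpolating (Lemma~\ref{lem:inter tech use}) between the bound $\norm{v-I_Xv}{t^*}\le Ch_X^{\tau-t^*}\norm{v}{\tau}$ at smoothness $\tau$ and an $h_X$\emph{-free} stability bound at smoothness $2\tau$ gives only $\norm{v-I_Xv}{t^*}\le Ch_X^{\theta(\tau-t^*)}\norm{v}{s^*}$ with $s^*=\theta\tau+(1-\theta)2\tau$; at $s^*=2\tau$ this degenerates to no rate at all, whereas \eqref{equ:app pro} demands $h_X^{2\tau-t^*}$. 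What is needed at the upper endpoint is a bound with the \emph{doubled} rate, and this is exactly the external ingredient the paper imports: Proposition~3.5 of \cite{TraLeGSloSte09a}, $\norm{v-I_Xv}{0}\le Ch_X^{2\tau}\norm{v}{2\tau}$, which combined with the native-space orthogonality $\inpro{v-I_Xv}{w}_\phi=0$ for $w\in\cV^\phi$ yields $\norm{v-I_Xv}{\tau}\le Ch_X^{\tau}\norm{v}{2\tau}$; only then does operator interpolation (first in the target index, then in the source index against \eqref{equ:Nar} with $s^*=\tau$) cover $0\le t^*\le\tau\le s^*\le 2\tau$. Without that doubling estimate your endpoint is too weak. (Incidentally, the paper uses \eqref{equ:Nar} only with $s^*=\tau$, so $\rho_X^{\tau-s^*}=1$ and no bounded mesh-ratio assumption is needed; your plan to absorb $\rho_X^{\tau-s^*}$ for $s^*<\tau$ would smuggle in a quasi-uniformity hypothesis that is not in the statement.)

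Second, for negative $t^*$ your cross-term device does not close. The interpolant is orthogonal to $\cV^\phi$ only in the $\phi$-inner product, which is \emph{equivalent} to, but not equal to, the $H^\tau$ inner product, and is certainly not the inner product $\inpro{\cdot}{\cdot}_{(t^*+r)/2}$ appearing in the duality identity \eqref{equ:neg nor}; so the term $\inpro{v-I_Xv}{I_Xw}_s$ does not vanish for any admissible $s$, and insisting on $\eta=I_Xv$ leaves it unestimated. The paper avoids this by abandoning the interpolant for these ranges: it takes $\eta=P_sv$, the $H^s$-orthogonal projection onto $\cV^\phi$ defined in \eqref{equ:Ps def v w 1}, for which $\inpro{v-P_sv}{\eta_w}_s=0$ holds exactly, and then runs the Aubin--Nitsche duality with the already-established estimate applied to the test function $w$. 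Moreover, a single duality step only lowers the target index to about $2s^*-2\tau$; reaching arbitrarily negative $t^*$ (the proposition allows any $t^*\le\tau$) requires the paper's induction over the intervals $\cI_k=[-k\tau,-(k-1)\tau]$, with the inductive hypothesis used both on $v$ and on the dual test function. Your sketch gestures at a bootstrap but sets up neither the projection-based orthogonality nor the induction, so as written the argument establishes \eqref{equ:app pro} only for $0\le t^*\le s^*\le\tau$ (and even there modulo the mesh-ratio caveat).
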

\begin{proof}
For $k = 0,1,2,\ldots$, we denote $\cI_k = [-k\tau, -(k-1)\tau]$ and
prove by  induction on $k$ that~\eqref{equ:app pro} holds for $t^*\in\cI_k$
for all $k$.

\noindent $\bullet$
We first prove that~\eqref{equ:app pro} is true when $t^*\in \cI_0$.
Indeed,  let $t^*\in \cI_0$. In this step, we consider 
two cases when $s^*$ belongs to $[\tau,2\tau]$ and $[t^*,\tau]$,
respectively.

\textbf{Case 1.1.} 
$\tau\le s^*\le 2\tau$.

Let $t$ and $s$ be real numbers satisfying 
$0\le t \le \tau \le
s \le 2\tau$. 
Let $I_Xv\in \cV^\phi$ be the interpolant of $v$ at
$\vecx_i$, $i=1,\ldots,N$. Then, by using \eqref{equ:rep
ker}, we deduce
\[
\inpro{v-I_Xv}{w}_\phi = 0
\quad\text{for all } w\in \cV^\phi.
\]
Hence, by using \eqref{equ:con hat phi} and the
Cauchy--Schwarz inequality, we obtain
for $v\in H^{2\tau}$
\begin{align}\label{equ:tau 2 tau}
\norm{v-I_Xv}{\tau}^2
&\simeq
\norm{v-I_Xv}{\phi}^2
=
\inpro{v-I_Xv}{v-I_Xv}_\phi \nonumber 
=
\inpro{v-I_Xv}{v}_\phi
\\
&\le
\sum_{\ell=0}^\infty \sum_{m=1}^{N(n,\ell)}
\frac{|\widehat v_{\ell,m} - \widehat{(I_Xv)}_{\ell,m}|
      |\widehat v_{\ell,m}|}{\widehat\phi(\ell)} \nonumber \\
&\simeq
\sum_{\ell=0}^\infty \sum_{m=1}^{N(n,\ell)}
(\ell+1)^{2\tau} |\widehat v_{\ell,m} - \widehat{(I_Xv)}_{\ell,m}|
      |\widehat v_{\ell,m}|
\le
\norm{v-I_Xv}{0} \norm{v}{2\tau}
\end{align}
Proposition 3.5 {in} \cite{TraLeGSloSte09a} gives
\begin{equation}\label{equ:v Iv 0 2tau}
\norm{v-I_Xv}{0}
\le
C h_X^{2\tau} \norm{v}{2\tau},
\end{equation}
which, together with \eqref{equ:tau 2 tau}, implies
\begin{equation}\label{equ:v Iv tau 2tau}
\norm{v-I_Xv}{\tau}
\le
C h_X^{\tau} \norm{v}{2\tau}.
\end{equation}
Noting the inequalities~\eqref{equ:v Iv 0 2tau}, \eqref{equ:v Iv tau 2tau},
and applying Lemma~\ref{lem:inter tech use} with $T = I-I_X$,
$s_1 = s_2 = 2\tau$, $t_1 = 0$, $t_2 = \tau$, and $\theta = (\tau-t)/\tau$,
we obtain
\begin{equation}\label{equ:v Iv t 2tau}
\norm{v-I_Xv}{t}
\le
C h_X^{2\tau-t} \norm{v}{2\tau},
\quad 0\le t \le \tau.
\end{equation}
On the other hand, by using \eqref{equ:Nar} with 
$t^*$ and $s^*$ replaced by $t$ and $\tau$, respectively,
we obtain
\begin{equation}\label{equ:v Iv t tau}
\norm{v-I_Xv}{t}
\le
C h_X^{\tau-t} \norm{v}{\tau},
\quad 0\le t \le \tau.
\end{equation}
Using Lemma~\ref{lem:inter tech use} again with $T = I - I_X$,
$t_1=t_2=t$, $s_1=\tau$, $s_2= 2\tau$, and $\theta = 2 - s/\tau$, we 
deduce 
\[
\norm{v-I_Xv}{t}
\le
C h_X^{s-t} \norm{v}{s},
\quad 0\le t \le \tau.
\]

Hence, we have proved 
\begin{equation}\label{equ:app inequ 0 t s 2tau}
\begin{cases}
0\le t^*\le\tau \le s^*\le 2\tau,
\\
\forall v\in H^{s^*},
\
\exists \eta_v = I_X v\in \cV^\phi:
\norm{v-\eta_v}{t^*}
\le
C h_X^{s^*-t^*} \norm{v}{s^*}.
\end{cases}
\end{equation}
\textbf{Case 1.2.} $t^* \le s^* <\tau$.

Let $s$ and $t$ be real numbers such that
$0\le s <\tau$ and  $2s-2\tau\le t\le s$. 
Let $P_{s} : H^{s}\to \cV^{\phi}$ be defined by
\begin{equation}\label{equ:Ps def v w 1}
\inpro{P_{s}v}{w}_{s}
=
\inpro{v}{w}_{s}
\quad\forall w\in \cV^{\phi}.
\end{equation}
It is easily seen that
\begin{equation}\label{equ:Ps 1}
\norm{v-P_{s}v}{s} \le \norm{v}{s}.
\end{equation}

If $2s-2\tau \le t\le 2s - \tau$ so that 
$\tau \le 2s-t \le 2\tau$ then
we apply~\eqref{equ:app inequ 0 t s 2tau} with $t^*$
and~$s^*$ replaced by $s$ and $2s-t$, respectively, to deduce 
that for any $w\in H^{2s-t}$, there exists $\eta_w\in\cV^\phi$ such that
\begin{equation}\label{equ:a p s 2s t 1}
\norm{w-\eta_w}{s} \le Ch_X^{s-t} \norm{w}{2s-t}.
\end{equation}
Since $\inprod{v- P_sv}{\eta_w}_s= 0$,
it follows from~\eqref{equ:neg nor}, \eqref{equ:CS s}, \eqref{equ:Ps 1}
and~\eqref{equ:a p s 2s t 1} that
\begin{align}
\norm{v-P_{s}v}{t}
&
=
\sup_{w\in H^{2s-t}\atop w\not=0}
\frac{\inpro{v-P_{s}v}{w}_{s}}{\norm{w}{2s-t}}
=
\sup_{w\in H^{2s-t}\atop w\not=0}
\frac{\inpro{v-P_{s}v}{w-\eta_w}_{s}}{\norm{w}{2s-t}}\notag \\
&
\le
\norm{v-P_{s}v}{s}
\sup_{w\in H^{2s-t}\atop w\not=0}
\frac{\norm{w-\eta_w}{s}}{\norm{w}{2s-t}}
\le
Ch_X^{s-t}\norm{v}{s}.\notag
\end{align}
In particular, for $t = 2s -\tau$ we have
\begin{equation}\label{equ:Ps s 2s s 1}
\norm{v-P_{s}v}{2s-\tau}
\le
Ch_X^{-s + \tau}\norm{v}{s}.
\end{equation}

If  $2s -\tau < t \le s$ then by 
noting \eqref{equ:Ps 1} and~\eqref{equ:Ps s 2s s 1}, and
applying Lemma~\ref{lem:inter tech use} with 
$T = I-P_s$, $s_1 = s_2 = s$, $t_1 = 2s-\tau$, 
$t_2 = s$, and $\theta = (t-s)/(s-\tau)$ we obtain $\norm{v-P_sv}{t}
\le Ch_X^{s-t}\norm{v}{s}$.

%
Combining both cases 1.1 and 1.2, we have proved that 
\begin{equation}\label{equ:app inequ 0 t s 2tau 2} 
\begin{cases}
t^*\in\cI_0,
\
t^*\le s^*\le 2\tau,
\\
\forall v\in H^{s^*}, \exists \eta_v\in\cV^\phi:
\norm{v-\eta_v}{t^*}
\le
C h_X^{s^*-t^*} \norm{v}{s^*}.
\end{cases}
\end{equation}

\noindent $\bullet$
Assume that for some $k_0\ge 0$, \eqref{equ:app pro} is true when $t^*\in
\cI_k$, for all $k=0,1,\ldots, k_0$, i.e., the following statement holds,

\begin{equation}\label{equ:statement k}
\begin{cases}
t^*\in \bigcup_{k=0}^{k_0}\cI_k,\ t^*\le s^*\le 2\tau,
\\
\forall v\in H^{s^*},\
\exists \eta_v\in \cV^\phi : 
\norm{v-\eta_v}{t^*}
\le
C h_X^{s^*-t^*}
\norm{v}{s^*}.
\end{cases}
\end{equation}

\noindent $\bullet$
We now prove that \eqref{equ:app pro} is also true when $t^*\in \cI_{k_0+1}$. 
Analogously to the case when $t^*\in\cI_0$,
we consider two cases when $s^*$ belongs to 
$[-k_0\tau, 2\tau]$ and $[t^*, -k_0\tau)$, respectively.

\textbf{Case 2.1.} $-k_0\tau \le s^*\le 2\tau$.

Let $t$ and $s$ be real numbers satisfying 
$t\in \cI_{k_0+1}$ and $s\in[-k_0\tau, 2\tau]$.
Let $P_{-k_0\tau} : H^{-k_0\tau}\to \cV^{\phi}$ 
be the projection defined by
\[
P_{-k_0\tau}v \in \cV^\phi: \quad
\inpro{P_{-k_0\tau}v}{w}_{-k_0\tau} = \inpro{v}{w}_{-k_0\tau}
\quad\forall w \in \cV^{\phi}.
\]
Then $P_{-k_0\tau}v$ is the best approximation of $v$ from $\cV^{\phi}$ in the
$H^{-k_0\tau}$-norm. It follows from~\eqref{equ:statement k} 
with ${-k_0\tau}$ and $s$ in place of $t^*$ and $s^*$, respectively, that
\begin{equation}\label{equ:P0 appro Pro}
\norm{v - P_{-k_0\tau}v}{-k_0\tau} \le Ch_X^{s + {k_0\tau}} \norm{v}{s}
\quad\forall v\in H^{s}.
\end{equation}
Since $t\in \cI_{k_0+1}$ so that $-k_0\tau\le -t-2k_0\tau\le 2\tau$, 
statement~\eqref{equ:statement k} with $t^*$ and $s^*$ replaced by 
$-k_0\tau $ and $-t-2k_0\tau$, respectively, assures that 
for any $w\in H^{-t-2k_0\tau}$, there exists  $\eta_w\in\cV^\phi$ 
such that 
\begin{equation}\label{equ:aus1}
\norm{w-\eta_w}{-k_0\tau}
\le 
C h_X^{-t-k_0\tau}
\norm{w}{-t-2k_0\tau}.
\end{equation}
Since $\inprod{v- P_{-k_0\tau}v}{\eta_w}_{-k_0\tau} = 0$, it follows 
from~\eqref{equ:neg nor} and~\eqref{equ:CS s} 
that
\begin{align*}
\norm{v - P_{-k_0\tau}v}{t}
&=
\sup_{w\in H^{-t - 2k_0\tau}\atop w\not=0}
\frac{\inpro{v-P_0v}{w}_{-k_0\tau}}{\norm{w}{{-t - {2k_0\tau}}}}
=
\sup_{w\in H^{-t - 2k_0\tau}\atop w\not=0}
\frac{\inpro{v-P_0v}{w- \eta_w}_{-k_0\tau}}{\norm{w}{{-t-2k_0\tau}}}\\
&\le
\norm{v-P_{-k_0\tau}v}{{-k_0\tau}}
\sup_{w\in H^{-t-2k_0\tau}\atop w\not=0}
\frac{\norm{w-\eta_w}{-k_0\tau}}{{\norm{w}{{-t-2k_0\tau}}}}. 
\end{align*}
Inequalities~\eqref{equ:P0 appro Pro} and~\eqref{equ:aus1} imply 
$\norm{v - P_{-k_0\tau}v}{t}\le Ch_X^{s-t}\norm{v}{s}$.

Hence, we have proved that 
\begin{equation}\label{equ:app inequ m 2tau t 0} 
\begin{cases}
-(k_0+1)\tau\le t^*\le -k_0\tau,
\
{-k_0\tau}\le s^*\le 2\tau,
\\
\forall v\in H^{s^*}, \exists \eta_v\in\cV^\phi:
\norm{v-\eta_v}{t^*}
\le
C h_X^{s^*-t^*} \norm{v}{s^*}.
\end{cases}
\end{equation}

\textbf{Case 2.2.} $t^* \le s^* < -k_0\tau$.

 Let $s$ and $t$ be real numbers such that 
$-(k_0+1)\tau \le s < -k_0\tau$ and
$2s-2\tau \le t \le s$.
Let $P_{s} : H^{s}\to \cV^{\phi}$ be defined by~\eqref{equ:Ps def v w 1} with
this new value of $s$. 

If $2s-2\tau \le t\le 2s + k_0\tau$ so that 
$-k_0\tau \le 2s-t \le 2\tau$ then
we can use the same argument as in Case~1.2 with~\eqref{equ:app inequ 0 t s
2tau} replaced by~\eqref{equ:app inequ m 2tau t 0} to obtain 
$\norm{v - P_sv}{t} \le C h_X^{s-t}\norm{v}{s}$.

If $2s+k_0\tau< t \le s$ then we use 
Lemma~\ref{lem:inter tech use} in the same manner as in Case~1.2 to obtain the
same estimate.

%
%

Combining both cases 2.1 and 2.2 we obtain the result for $k=k_0+1$, completing
the proof.
\end{proof}

\section{Approximate solutions}\label{sec:App sol}

\subsection{Approach}

Noting \eqref{equ:u u0 u1}, we 
shall seek an approximate
solution $\wtd u\in H^{\sigma+\al}$ in the form
\[
\wtd u = \wtd u_0 + \wtd u_1
\quad\text{where}\quad
\wtd u_0 \in \kerL
\quad\text{and}\quad
\wtd u_1 \in \cV^\phi.
\]
The solution $\wtd u_1$ will be found by the Galerkin or
collocation method.
Having found $\wtd u_1$, we will find $\wtd u_0\in \kerL$ 
by solving the equations (cf. \eqref{equ:sys equ})
\[
\inprod{\mu_i}{\wtd u_0}
=
\gamma_i - \inprod{\mu_i}{\wtd u_1},\quad i = 1,\ldots, M,
\]
so that
\begin{equation}\label{equ:mui wtd u}
\inprod{\mu_i}{\wtd u} = \inprod{\mu_i}u,
\quad i = 1,\ldots, M.
\end{equation}
The unique existence of $\wtd u_0$ follows from Assumption B in 
exactly the same way as that of $u_0$; see Proposition~\ref{the:ext uni}.

We postpone until Sections~\ref{sec:Gal} and~\ref{sec:Col Met} the issue of
finding $\wtd u_1$.
It is noted that in general
$\cV^\phi\notsubset(\kerL)_{H^{\sigma+\al}}^\perp$.
However, $\wtd u$ can be rewritten in a form similar to
\eqref{equ:u u0 u1} as follows. Let 
\begin{equation}\label{equ:u0s}
u_0^* 
:= 
\wtd u_0 
+ 
\sum_{\ell\in\cK(L)} \sum_{m=1}^{N(n,\ell)}
\wth{(\wtd u_1)}_{\ell,m} Y_{\ell,m}
\end{equation}
and
\begin{equation}\label{equ:u1s}
u_1^*
=
\sum_{\ell\notin\cK(L)} 
\sum_{m=1}^{N(n,\ell)} \wth{(\wtd u_1)}_{\ell,m} Y_{\ell,m}.
\end{equation}
Then 
\begin{equation}\label{equ:wtd u}
\wtd u = u_0^* + u_1^*
\quad\text{with}\quad
u_0^* \in \kerL
\quad\text{and}\quad
u_1^* \in (\kerL)_{H^{\sigma+\al}}^\perp.
\end{equation}
It should be noted that, in general, $u_1^*$ does not belong
to $\cV^\phi$, and that this function is introduced purely for
analysis purposes. We do not explicitly compute $u_1^*$, nor
$u_0^*$.

\subsection{Preliminary error analysis}

Assume that the exact solution $u$ and the approximate 
solution $\wtd u$ of Problem~A belong to $H^t$ for some 
$t\in\R$, and assume that  
$\mu_i\in H^{-t}$ for $i=1,\ldots,M$.
Comparing \eqref{equ:u u0 u1} and \eqref{equ:wtd u} suggests
that $\norm{u-\wtd u}{t}$ can be estimated by
estimating $\norm{u_0-u_0^*}{t}$ and
$\norm{u_1-u_1^*}{t}$. It turns out that an estimate for
the latter is sufficient, as shown in the
following two lemmas.
\begin{lemma}\label{lem:u0 u1}
Let $u_0$, $u_1$, $u_0^*$ and $u_1^*$ be defined by
\eqref{equ:u u0 u1}, \eqref{equ:u0s} and \eqref{equ:u1s}.
For $i=1,\ldots,M$,
if $\mu_i\in H^{-t}$ for some $t\in\R$, then
\[
\norm{u_0-u_0^*}{t}
\le
C \norm{u_1-u_1^*}{t},
\]
where $C$ is independent of $u$.
\end{lemma}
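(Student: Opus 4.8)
The plan is to exploit the fact that $u_0 - u_0^*$ lives in the finite-dimensional space $\kerL$, on which all norms $\norm{\cdot}{t}$ are equivalent (with constants depending only on $\kerL$, hence on $L$), and to use the unisolvency Assumption~B to control $u_0 - u_0^*$ by the data $\inprod{\mu_i}{u_0 - u_0^*}$. The key algebraic observation is that $u - \wtd u = (u_0 - u_0^*) + (u_1 - u_1^*)$, and that, by construction, $\inprod{\mu_i}{u} = \inprod{\mu_i}{\wtd u}$ for $i = 1,\ldots,M$ (this is \eqref{equ:mui wtd u}). Subtracting, $\inprod{\mu_i}{u_0 - u_0^*} = -\inprod{\mu_i}{u_1 - u_1^*}$ for every $i$. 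Since $u_0 - u_0^* \in \kerL$ and $\mu_i \in H^{-t}$, each duality pairing $\inprod{\mu_i}{u_0 - u_0^*}$ makes sense, and I would first record the estimate $|\inprod{\mu_i}{u_0 - u_0^*}| = |\inprod{\mu_i}{u_1 - u_1^*}| \le \norm{\mu_i}{-t}\,\norm{u_1 - u_1^*}{t}$ via Cauchy--Schwarz \eqref{equ:CS s}.

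Next I would set up the linear-algebra step. Write $u_0 - u_0^* = \sum_{\ell \in \cK(L)} \sum_{m=1}^{N(n,\ell)} d_{\ell,m} Y_{\ell,m}$, an expansion with $M$ coefficients. The map sending the coefficient vector $(d_{\ell,m})$ to the data vector $\big(\inprod{\mu_i}{u_0 - u_0^*}\big)_{i=1}^{M}$ is given by the $M \times M$ matrix with entries $\inprod{\mu_i}{Y_{\ell,m}}$; Assumption~B says exactly that this matrix is injective, hence invertible. Therefore the coefficient vector is bounded, in any fixed norm, by a constant (the norm of the inverse matrix, depending only on $L$ and the $\mu_i$) times the data vector. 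Passing back from coefficients to the $H^t$-norm of $u_0 - u_0^*$ uses only that $\kerL$ is finite-dimensional: $\norm{u_0 - u_0^*}{t} \le C \max_{\ell,m} |d_{\ell,m}| \le C \max_i |\inprod{\mu_i}{u_0 - u_0^*}| = C \max_i |\inprod{\mu_i}{u_1 - u_1^*}| \le C \max_i \norm{\mu_i}{-t}\,\norm{u_1 - u_1^*}{t}$. Absorbing $\max_i \norm{\mu_i}{-t}$ into the generic constant gives the claim.

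There is no serious obstacle here; the only point requiring a little care is making sure every pairing is legitimate. Since $u_0 - u_0^* \in \kerL \subset H^s$ for all $s$ (it is a finite linear combination of spherical harmonics), and $\mu_i \in H^{-t}$, the pairing $\inprod{\mu_i}{u_0 - u_0^*}$ is the duality product between $H^{-t}$ and $H^t$ as discussed after \eqref{equ:Sob nor}; and $\inprod{\mu_i}{u_1 - u_1^*}$ is legitimate because the hypotheses of the ambient discussion assume $u, \wtd u \in H^t$, hence $u_1 - u_1^* = (u - \wtd u) - (u_0 - u_0^*) \in H^t$. The constant $C$ depends on $L$ (through $\kerL$ and the norm-equivalence constants on it) and on the fixed functionals $\mu_i$, but not on $u$ or on $h_X$, as required.
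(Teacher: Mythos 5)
Your proposal is correct and follows essentially the same route as the paper: the identity $\inprod{\mu_i}{u}=\inprod{\mu_i}{\wtd u}$ from \eqref{equ:mui wtd u} gives $|\inprod{\mu_i}{u_0-u_0^*}|\le\norm{\mu_i}{-t}\norm{u_1-u_1^*}{t}$, and then unisolvency plus finite-dimensionality of $\kerL$ converts this into the $H^t$-bound. The only cosmetic difference is that you make the last step explicit through the invertible matrix $\big(\inprod{\mu_i}{Y_{\ell,m}}\big)$, whereas the paper phrases it as equivalence of the norm $\max_i|\inprod{\mu_i}{\cdot}|$ with $\norm{\cdot}{t}$ on $\kerL$.
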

\begin{proof}
For $i=1,\ldots, M$,
it follows from \eqref{equ:mui wtd u} that
\[
\inprod{\mu_i}{u_0} + \inprod{\mu_i}{u_1}  
= 
\inprod{\mu_i}{u_0^*} +\inprod{\mu_i}{u_1^*},
\]
implying
$\inprod{\mu_i}{u_0-u_0^*}  
= 
\inprod{\mu_i}{u_1^*-u_1}$.
Inequality \eqref{equ:neg nor} with $s_1 = t$ and $s_2 = -t$ yields
\begin{align*}
\abs{\inprod{\mu_i}{u_0- u_0^*}}  
&
= 
\abs{\inprod{\mu_i}{u_1- u_1^*}}
\leq 
\norm{\mu_i}{-t}\norm{u_1- u_1^*}{t}.
\end{align*}
This result holds for all $i = 1,\ldots, M$, implying
\[
\norm{u_0-u_0^*}{\mu}\
\leq\  
\cM \,\norm{u_1-u_1^*}{t},
\]
where
$\mathcal M := \max_{i=1,\ldots, M}\norm{\mu_i}{-t}$, and
$\norm{v}\mu 
: =
\max_{i = 1,\ldots, M}\abs{\inprod{\mu_i}{v}}$
for all 
$v\in\kerL$.
(The unisolvency assumption assures us that the above norm is
well-defined.) The subspace $\kerL$ being finite-dimensional, we
deduce
\[
\norm{u_0-u_0^*}{t} 
\leq 
C\,\norm{u_1-u_1^*}{t},
\]
proving the lemma. 
\end{proof}

\begin{lemma}\label{lem:u u1}
Under the assumptions of Lemma~\ref{lem:u0 u1}, there holds
\[
\norm{u-\wtd u}{t}
\le
C \norm{u_1-u_1^*}{t}.
\]
\end{lemma}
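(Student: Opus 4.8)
The plan is to decompose the error $u - \wtd u$ using the two representations \eqref{equ:u u0 u1} and \eqref{equ:wtd u}. Writing $u = u_0 + u_1$ and $\wtd u = u_0^* + u_1^*$, the triangle inequality gives
\[
\norm{u - \wtd u}{t}
\le
\norm{u_0 - u_0^*}{t} + \norm{u_1 - u_1^*}{t}.
\]
Then I would invoke Lemma~\ref{lem:u0 u1}, which under the stated hypotheses (namely $\mu_i \in H^{-t}$ for $i = 1,\ldots, M$) bounds $\norm{u_0 - u_0^*}{t}$ by $C\norm{u_1 - u_1^*}{t}$. Substituting this into the inequality above yields
\[
\norm{u - \wtd u}{t}
\le
(C + 1)\,\norm{u_1 - u_1^*}{t},
\]
which is the claimed estimate (with a relabelled constant).

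There is essentially no obstacle here: the statement is a direct corollary of Lemma~\ref{lem:u0 u1} combined with the triangle inequality applied to the matching decompositions of $u$ and $\wtd u$ into their $\kerL$-components and $(\kerL)^\perp_{H^{\sigma+\al}}$-components. The only point requiring a moment's care is making sure the decomposition $\wtd u = u_0^* + u_1^*$ with $u_0^* \in \kerL$ and $u_1^* \in (\kerL)^\perp_{H^{\sigma+\al}}$ is legitimate — but this is exactly \eqref{equ:wtd u}, established in the preceding subsection, and the analogous statement for $u$ is \eqref{equ:u u0 u1}. The hypotheses of Lemma~\ref{lem:u u1} are declared to be "the assumptions of Lemma~\ref{lem:u0 u1}," so $\mu_i \in H^{-t}$ and $u, \wtd u \in H^t$ are all available.

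In short, the proof is three lines: split $\norm{u-\wtd u}{t}$ by the triangle inequality into the $\kerL$-part and the $(\kerL)^\perp$-part, control the first by the second via Lemma~\ref{lem:u0 u1}, and absorb constants. I expect the write-up to be no longer than that, with no genuine difficulty.
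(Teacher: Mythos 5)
Your proposal is correct and follows essentially the same route as the paper: split $u-\wtd u$ into $(u_0-u_0^*)+(u_1-u_1^*)$ and control the kernel part via Lemma~\ref{lem:u0 u1}. The only cosmetic difference is that the paper exploits the disjoint Fourier supports ($\ell\in\cK(L)$ versus $\ell\notin\cK(L)$) to get the exact identity $\norm{u-\wtd u}{t}^2=\norm{u_0-u_0^*}{t}^2+\norm{u_1-u_1^*}{t}^2$, whereas you use the triangle inequality, which gives the same bound up to the value of the constant.
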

\begin{proof}
Noting \eqref{equ:u u0 u1} and \eqref{equ:wtd u}, 
the norm $\norm{u-\wtd u}{t}$ can be rewritten as
\begin{align*}
\norm{u-\wtd u}{t}^2 
&= 
\sum_{\ell\in \mathcal K(L)}\,\sum_{m=1}^{N(n,\ell)}\,
(\ell+1)^{2t}\,
{|}\widehat{u}_{\ell,m} - \widehat{(\wtd u)}_{\ell,m}{|}^2  \\
&\quad+ 
\sum_{\ell\notin \mathcal K(L)}\,\sum_{m=1}^{N(n,\ell)}\,
(\ell+1)^{2t}\,
{|}\widehat{u}_{\ell,m} - \widehat{(\wtd u)}_{\ell,m}{|}^2
\\
&= 
\sum_{\ell\in \mathcal K(L)}\,\sum_{m=1}^{N(n,\ell)}\,
(\ell+1)^{2t}\,
{|}\widehat{(u_0)}_{\ell,m} - \widehat{(u_0^*)}_{\ell,m}{|}^2 \\
&\quad+ 
\sum_{\ell\notin \mathcal K(L)}\,\sum_{m=1}^{N(n,\ell)}\,
(\ell+1)^{2t}\,
{|}\widehat{(u_1)}_{\ell,m} - \widehat{(u_1^*)}_{\ell,m}{|}^2
\\
&= 
\norm{u_0 - u_0^*}{t}^2 +
\norm{u_1-u_1^*}{t}^2. 
\end{align*}
The required result now follows from Lemma~\ref{lem:u0 u1}.
\end{proof}

In the following sections, we describe methods
to construct $\wtd u_1$,
and estimate $\norm{u_1-u_1^*}{t}$ accordingly.

\section{Galerkin approximation}\label{sec:Gal}
Recalling \eqref{equ:V Hs}, we 
choose the shape functions $\phi$ 
in this subsection such that 
\begin{equation}\label{equ:tau al Gal}
\tau > \frac{1}{2}\left( \al + \frac{n-1}{2}\right),
\end{equation}
so that $\cV^\phi\subset H^{\al}$.
We find $\wtd u_1\in \cV^\phi$ by solving the Galerkin equation
\begin{equation}\label{equ:L wtd u1}
a(\wtd u_1,v) 
= 
\inprod{g}{v}\quad\text{for all }\,v\in \cV^\phi.
\end{equation}
By writing $\wtd u_1 = \sum_{i=1}^N c_i\Phi_i$ we derive 
from \eqref{equ:L wtd u1} the matrix equation
$
\vecA^{(G)}\vecc
=
\vecg,
$
where 
\begin{equation}\label{equ:matrix A}
\vecA^{(G)}_{ij}
=
a(\Phi_i,\Phi_j)
=
\sum_{\ell=0}^\infty
\sum_{m=1}^{N(n,\ell)} 
\wth L(\ell)  \, [\wth \phi(\ell)]^2 \, 
{Y_{\ell,m}(\vecx_i)} \, Y_{\ell,m}(\vecx_j),
\end{equation}
$\vecc = (c_1,\ldots, c_N)$, and 
$\vecg = \left(\inprod{g}{\Phi_1},\ldots,\inprod{g}{\Phi_N}\right)$.
\begin{lemma}\label{lem:A SG}
The matrix $\vecA^{(G)}$ is symmetric positive-definite.
\end{lemma}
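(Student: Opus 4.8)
The plan is to show that $\vecA^{(G)}$ is symmetric and that $\vecc^\top \vecA^{(G)} \vecc > 0$ for every nonzero $\vecc \in \R^N$. Symmetry is immediate from the bilinear form structure: since $a(\Phi_i,\Phi_j) = \inprod{L\Phi_i}{\Phi_j}$ and $L$ is a pseudodifferential operator with real symbol, the series representation in \eqref{equ:matrix A} is manifestly symmetric in $i$ and $j$ (it is a sum of terms $\wth L(\ell)[\wth\phi(\ell)]^2 Y_{\ell,m}(\vecx_i)Y_{\ell,m}(\vecx_j)$, and the spherical harmonics $Y_{\ell,m}$ may be taken real-valued, or one invokes the symmetry of $a$). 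So the real work is positive-definiteness.

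**For positive-definiteness,** I would take $\vecc = (c_1,\ldots,c_N) \neq \veczero$ and set $w := \sum_{i=1}^N c_i \Phi_i \in \cV^\phi$. Then by bilinearity, $\vecc^\top \vecA^{(G)} \vecc = \sum_{i,j} c_i c_j\, a(\Phi_i,\Phi_j) = a(w,w)$. Since $\tau$ satisfies \eqref{equ:tau al Gal}, we have $\cV^\phi \subset H^\al$, so $w \in H^\al$. The key structural fact is that $\cV^\phi$ contains no nonzero element of $\kerL$: indeed, by \eqref{equ:Phi hat} the Fourier coefficients of any $w = \sum c_i\Phi_i$ are $\wth w_{\ell,m} = \wth\phi(\ell)\sum_i c_i Y_{\ell,m}(\vecx_i)$, and these would all have to vanish for $\ell \in \cK(L)$; but more directly, I would argue that $w \neq 0$ forces $a(w,w) > 0$ by decomposing $w = w_0 + w_1$ with $w_0 \in \kerL$ and $w_1 \in (\kerL)^\perp_{H^\al}$, using that $a(w,w) = a(w_1,w_1)$ (since $L w_0 = 0$ kills the $w_0$ part) and then invoking part 3 of Lemma~\ref{lem:a bil for}, namely $a(w_1,w_1) \simeq \norm{w_1}{\al}^2$.

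**The one genuine obstacle** is ruling out the degenerate case $w_1 = 0$, i.e. showing that a nonzero $w \in \cV^\phi$ cannot lie entirely in $\kerL$. This is where strict positive-definiteness of $\Phi$ enters. The functions $\Phi_1,\ldots,\Phi_N$ are linearly independent: if $\sum c_i \Phi_i = 0$ then, evaluating the native-space inner products or equivalently looking at the Gram matrix $[\Phi(\vecx_i,\vecx_j)]_{i,j} = [\inprod{\Phi_i}{\Phi_j}_\phi]_{i,j}$, which is positive-definite because $\Phi$ is a strictly positive-definite kernel and the $\vecx_i$ are distinct, one concludes $\vecc = \veczero$. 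Hence $w \neq 0$ whenever $\vecc \neq \veczero$. To then exclude $w \in \kerL$: if $w \in \kerL = \spann\{Y_{\ell,m}: \ell \in \cK(L)\}$, its Fourier coefficients vanish for all $\ell \notin \cK(L)$; but $\wth w_{\ell,m} = \wth\phi(\ell)\sum_i c_i Y_{\ell,m}(\vecx_i)$ and $\wth\phi(\ell) > 0$ for all $\ell$ by \eqref{equ:con hat phi}, so $\sum_i c_i Y_{\ell,m}(\vecx_i) = 0$ for all $\ell \notin \cK(L)$ and all $m$ — and since $\cK(L)$ is finite while $\{Y_{\ell,m}\}$ is a basis of $L_2(\mS)$ and point evaluation at $N$ distinct points is determined by infinitely many harmonics, this forces $\vecc = \veczero$, a contradiction. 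Therefore $w_1 \neq 0$, so $a(w,w) = a(w_1,w_1) \simeq \norm{w_1}{\al}^2 > 0$, and $\vecA^{(G)}$ is positive-definite.
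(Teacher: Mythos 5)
Your reduction $\vecc^\top\vecA^{(G)}\vecc=a(w,w)=a(w_1,w_1)\simeq\norm{w_1}{\al}^2$, with $w=\sum_{i}c_i\Phi_i\in\cV^\phi\subset H^\al$ split as $w=w_0+w_1$, $w_0\in\kerL$, $w_1\in(\kerL)^\perp_{H^\al}$, is sound (the cross term vanishes because $Lw_1$ has Fourier support off $\cK(L)$ while $w_0$ is supported on $\cK(L)$, and Lemma~\ref{lem:a bil for} gives coercivity). The entire difficulty, however, sits in your last step, and there you do not give a proof. What must be shown is: if $\sum_{i=1}^N c_i\Ylm(\vecx_i)=0$ for every $m$ and every $\ell\notin\cK(L)$, then $\vecc=\veczero$. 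Your stated reason, that $\cK(L)$ is finite while ``point evaluation at $N$ distinct points is determined by infinitely many harmonics,'' is a paraphrase of the claim, not an argument for it. Note also that the strict positive definiteness of $\Phi$, which you correctly use for linear independence of the $\Phi_j$, does not settle this point: $\vecc^\top\bigl[\Phi(\vecx_i,\vecx_j)\bigr]\vecc=\sum_{\ell\ge0}\wth\phi(\ell)\sum_{m=1}^{N(n,\ell)}\bigl|\sum_i c_i\Ylm(\vecx_i)\bigr|^2$ still contains the terms with $\ell\in\cK(L)$, so it can be nonzero even when all the coefficients with $\ell\notin\cK(L)$ vanish. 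The missing ingredient is precisely the characterisation recalled in Remark~\ref{rem:ker}: a zonal kernel whose Fourier--Legendre coefficients are nonnegative and positive for all but finitely many $\ell$ (hence for infinitely many even and infinitely many odd $\ell$) is strictly positive definite. Applying this to the kernel with coefficients $\wth\phi(\ell)$ for $\ell\notin\cK(L)$ and $0$ for $\ell\in\cK(L)$ turns the vanishing of $\sum_i c_i\Ylm(\vecx_i)$ for all $\ell\notin\cK(L)$ into $\vecc=\veczero$. (An alternative patch: the distribution $\sum_i c_i\delta_{\vecx_i}$ would have only finitely many nonzero Fourier coefficients, hence would equal a spherical polynomial, which a nonzero combination of point masses cannot be.)

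Once Remark~\ref{rem:ker} is invoked anyway, the paper's own proof is the shorter route and bypasses your energy argument entirely: by \eqref{equ:matrix A}, $\vecA^{(G)}_{ij}=\Theta(\vecx_i,\vecx_j)$ for the zonal kernel $\Theta$ with coefficients $\wth\theta(\ell)=\Lhe[\phe]^2$ for $\ell\notin\cK(L)$ and $\wth\theta(\ell)=0$ for $\ell\in\cK(L)$; these are nonnegative by strong ellipticity \eqref{equ:strongly ellip} and vanish only on the finite set $\cK(L)$, so $\Theta$ is strictly positive definite and $\vecA^{(G)}$ is symmetric positive definite at once, with no need for the decomposition with respect to $\kerL$, the coercivity estimate, or condition \eqref{equ:tau al Gal}. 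Your approach is viable after the patch above, but it ends up using the same key fact while carrying extra machinery.
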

\begin{proof}
Let $\theta$ be a shape function whose Fourier--Legendre 
coefficients are given by 
\[
\wth\theta(\ell)
=
\begin{cases}
\Lhe[\phe]^2 & \text{if}\ \ell\notin\cK(L)
\\
0            & \text{if}\ \ell\in\cK(L).
\end{cases}
\]
Then $\vecA^{(G)}_{ij} = \Theta(\vecx_i,\vecx_j)$ where 
$\Theta$ is the kernel defined from $\theta$. 
Since $\wth\theta(\ell) \ge 0$ for all $\ell\ge 0$, and
$\wth\theta(\ell)=0$ only for a finite number of $\ell$,
it follows from
Remark~\ref{rem:ker} that $\vecA^{(G)}$ is symmetric positive-definite.
\end{proof}
As a consequence of this lemma,
there exists a unique solution  $\wtd u_1$ 
to \eqref{equ:L wtd u1}.
With $\wtd u_1$ given by \eqref{equ:L wtd u1}, $u_1^*$
defined by \eqref{equ:u1s} satisfies $u_1^* \in H^\al$ and
\begin{equation}\label{equ:L u1s}
a(u_1^*,v)
=
\inpro{g}{v}
\quad\text{for all } v\in \cV^{\phi}.
\end{equation}
Even though in general $u_1^*$ does not belong to $\cV^\phi$, the
following result is essentially C\'ea's Lemma.
\begin{lemma}\label{lem:u1 u1s v}
If $u_1$ and $u_1^*$ are defined by \eqref{equ:Lu1}
and \eqref{equ:u1s} with $\wtd u_1$ given by 
\eqref{equ:L wtd u1}, then
\[
\norm{u_1-u_1^*}{\alpha}
\le
C  \norm{u_1-v}{\alpha}
\quad\text{for all } v\in \cV^\phi.
\]
\end{lemma}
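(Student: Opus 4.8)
The plan is to prove this Céa-type estimate by exploiting the near-orthogonality that is built into the construction. The key observations are: (i) by \eqref{equ:Lu1}, $a(u_1,v)=\inpro{g}{v}$ for all $v\in H^{\al-\sigma}$, in particular for all $v\in\cV^\phi$ (since $\cV^\phi\subset H^\al\subset H^{\al-\sigma}$ because $\sigma\ge0$); (ii) by \eqref{equ:L u1s}, $a(u_1^*,v)=\inpro{g}{v}$ for all $v\in\cV^\phi$. Subtracting gives the Galerkin orthogonality $a(u_1-u_1^*,v)=0$ for all $v\in\cV^\phi$. The other ingredient is that $u_1^*\in(\kerL)^\perp_{H^\al}$ (stated just before \eqref{equ:L u1s}) and $u_1\in(\kerL)^\perp_{H^{\sigma+\al}}\subset(\kerL)^\perp_{H^\al}$, so $u_1-u_1^*\in(\kerL)^\perp_{H^\al}$, on which the bilinear form is coercive by the last assertion of Lemma~\ref{lem:a bil for}, namely $a(w,w)\simeq\norm{w}{\al}^2$.

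The argument then runs exactly as in the classical proof. First I would use coercivity (Lemma~\ref{lem:a bil for}, part~3 with $s=\al$) to write $\norm{u_1-u_1^*}{\al}^2\le C\,a(u_1-u_1^*,u_1-u_1^*)$. Next, for an arbitrary $v\in\cV^\phi$, insert $v$ using Galerkin orthogonality: $a(u_1-u_1^*,u_1-u_1^*)=a(u_1-u_1^*,u_1-v)+a(u_1-u_1^*,v-u_1^*)$, and the second term vanishes because $v-u_1^*\in\cV^\phi$ (here we use that $u_1^*$, while not in $\cV^\phi$, satisfies the orthogonality relation against all of $\cV^\phi$). Then bound the first term by boundedness of $a$ (Lemma~\ref{lem:a bil for}, part~1 with $s=0$): $|a(u_1-u_1^*,u_1-v)|\le C\norm{u_1-u_1^*}{\al}\norm{u_1-v}{\al}$. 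Combining and cancelling one factor of $\norm{u_1-u_1^*}{\al}$ gives the claim.

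I do not expect any genuine obstacle here; the statement is a routine abstract-Galerkin argument and every ingredient is already available in the excerpt. The one point that deserves a careful word in the write-up is the bookkeeping around $u_1^*$: it is \emph{not} the Galerkin solution in $\cV^\phi$, nor is it equal to $\wtd u_1$, yet it is the right object because \eqref{equ:L u1s} says it satisfies the Galerkin equations tested against $\cV^\phi$, and \eqref{equ:u1s} together with the remark before \eqref{equ:L u1s} places it in $(\kerL)^\perp_{H^\al}$. Once those two facts are cited, the chain of inequalities is immediate, so the proof is short.
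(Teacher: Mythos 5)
Your overall strategy is the same as the paper's: Galerkin orthogonality $a(u_1-u_1^*,v)=0$ for all $v\in\cV^\phi$, coercivity of $a$ on $(\kerL)^\perp_{H^\al}$, boundedness, and cancellation, i.e.\ C\'ea's argument. However, one step fails as justified: you discard the term $a(u_1-u_1^*,\,v-u_1^*)$ \emph{because} ``$v-u_1^*\in\cV^\phi$''. Since $u_1^*\notin\cV^\phi$ in general (as you yourself note), $v-u_1^*$ is not an element of $\cV^\phi$, so the Galerkin orthogonality cannot be applied to it; and the parenthetical appeal to \eqref{equ:L u1s} does not repair this, because that relation places $u_1^*$ in the \emph{first} argument of $a$ tested against elements of $\cV^\phi$, whereas in the offending term $u_1^*$ sits in the \emph{second} argument paired with $u_1-u_1^*\notin\cV^\phi$. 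As written, the vanishing of $a(u_1-u_1^*,u_1^*)-a(u_1-u_1^*,v)$ is asserted rather than proved.

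The missing ingredient is precisely the paper's identity \eqref{equ:w u1}: since $u_1^*$ and $\wtd u_1$ differ only in the Fourier components $\Ylm$ with $\ell\in\cK(L)$, where $\wth L(\ell)=0$, one has $a(w,u_1^*)=\inpro{Lw}{u_1^*}=\inpro{Lw}{\wtd u_1}=a(w,\wtd u_1)$ for every $w\in H^\al$. Applying this with $w=u_1-u_1^*$ and then the orthogonality \eqref{equ:u1 u1s v} with $\wtd u_1\in\cV^\phi$ yields $a(u_1-u_1^*,u_1^*)=a(u_1-u_1^*,\wtd u_1)=0$, after which your chain of inequalities goes through verbatim. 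So the gap is small and local---it is exactly the ``bookkeeping around $u_1^*$'' you flagged---but the reason you give is incorrect and must be replaced by this kernel-component observation.
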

\begin{proof}
It follows from the definition \eqref{equ:u1s} of $u_1^*$ that
\begin{equation}\label{equ:w u1}
a(w,u_1^*)
=
a(w,\wtd u_1)
\quad\text{for all } w\in H^\al.
\end{equation}
Moreover, since $\cV^\phi\subset H^\al\subset H^{\al-\sigma}$
(noting $\sigma\ge 0$) we infer from \eqref{equ:Lu1} and
\eqref{equ:L u1s}
\begin{equation}\label{equ:u1 u1s v}
a(u_1-u_1^*,v) = 0
\quad\text{for all } v\in \cV^\phi.
\end{equation}
Since $u_1 -u_1^*\in(\kerL)^\perp_{H^\al}$, Lemma~\ref{lem:a bil for}
yields
\[
\norm{u_1 - u_1^*}{\al}^2
\simeq
a(u_1-u_1^*, u_1-u_1^*)
=
a(u_1-u_1^*, u_1) - a(u_1-u_1^*, u_1^*).
\]
It follows from \eqref{equ:w u1} and \eqref{equ:u1 u1s v},
noting $u_1-u_1^*\in H^\al$ and $\wtd u_1\in \cV^\phi$, that
\[
\norm{u_1-u_1^*}{\al}^2
\simeq
a(u_1-u_1^*, u_1)
-
a(u_1-u_1^*, \wtd u_1)
=
a(u_1-u_1^*, u_1).
\]
Hence, using again \eqref{equ:u1 u1s v}, we obtain for any $v\in \cV^\phi$ 
\[
\norm{u_1-u_1^*}{\alpha}^2
\simeq
a(u_1-u_1^*,u_1-v)
\le
C \norm{u_1-u_1^*}{\alpha} \ \norm{u_1-v}{\alpha},
\]
where in the last step we used Lemma~\ref{lem:a bil for}.
By cancelling similar terms we obtain the required result.
\end{proof}

The above lemma and Proposition~\ref{pro:app pro} will be used to estimate the
error $u_1-u_1^*$.

\begin{lemma}\label{lem:hn1}
Assume that the shape function
$\phi$ is chosen to 
satisfy~\eqref{equ:con hat phi},~\eqref{equ:tau al Gal} and
$\tau\ge\al$,  $\tau>(n-1)/2$. 
Let $u_1$ and $u_1^*$ be defined as in Lemma~\ref{lem:u1 u1s v}. Assume that
$u_1\in H^s$ for some $s$ satisfying 
$\al\le s\le 2\tau$.
Let $t\in\R$ satisfy 
$2(\al-\tau)\le t \leq \al$. Then
for $h_X$ sufficiently small there holds
\begin{equation}\label{equ:brit 1}
\norm{u_1-u_1^*}{t}\
\leq\
Ch_X^{s-t}\,\norm{u_1}{s}.
\end{equation}
The constant $C$ is independent of $u$ and $h_X$.

\end{lemma}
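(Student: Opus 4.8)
The plan is to combine C\'ea's Lemma (Lemma~\ref{lem:u1 u1s v}) with the approximation property of $\cV^\phi$ (Proposition~\ref{pro:app pro}), bootstrapping from the energy norm $\norm{\cdot}{\al}$ down to the full range $2(\al-\tau)\le t\le\al$ by an Aubin--Nitsche duality argument. First I would establish the base case $t=\al$: by Lemma~\ref{lem:u1 u1s v}, $\norm{u_1-u_1^*}{\al}\le C\norm{u_1-v}{\al}$ for every $v\in\cV^\phi$, and since $u_1\in H^s$ with $\al\le s\le 2\tau$ and $\al\le\tau$ (so the hypotheses $t^*\le s^*\le 2\tau$, $t^*\le\tau$ of Proposition~\ref{pro:app pro} hold with $t^*=\al$, $s^*=s$), choosing $v=\eta$ from that proposition gives
\[
\norm{u_1-u_1^*}{\al}\le Ch_X^{s-\al}\norm{u_1}{s}.
\]
This already settles $t=\al$ and, incidentally, $s=\al$ trivially.

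Next, for $t<\al$ I would use the duality identity \eqref{equ:neg nor}. Writing $t=\frac{s_1+s_2}{2}$ appropriately --- concretely, with $s_1=t$ there is a dual exponent; the natural choice is to pair via the $H^\al$-inner product, so that
\[
\norm{u_1-u_1^*}{t}
=
\sup_{0\ne w\in H^{2\al-t}}\frac{\inprod{u_1-u_1^*}{w}_\al}{\norm{w}{2\al-t}}.
\]
The Galerkin orthogonality \eqref{equ:u1 u1s v} does not directly give $\inprod{u_1-u_1^*}{v}_\al=0$; rather it gives $a(u_1-u_1^*,v)=0$ for $v\in\cV^\phi$. So I would instead insert the operator $L$: since $u_1-u_1^*\in(\kerL)^\perp_{H^\al}$ and $L$ restricted to that space is an isomorphism onto the corresponding complement, write $\inprod{u_1-u_1^*}{w}_\al$ in terms of $a(u_1-u_1^*,z)$ where $z$ solves an auxiliary problem $Lz$ related to $w$ (this is the standard Nitsche trick: $z\in(\kerL)^\perp_{H^{2\al-t}}$ with $a(\cdot,z)$ reproducing the pairing against $w$, and $\norm{z}{2\al-t}\le C\norm{w}{2\al-t}$ by the mapping properties of $L^{-1}$ from Lemma~\ref{lem:a bil for}, specifically \eqref{equ:equi norm}). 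Then by \eqref{equ:u1 u1s v}, $a(u_1-u_1^*,z)=a(u_1-u_1^*,z-\eta_z)$ for any $\eta_z\in\cV^\phi$, and boundedness of $a$ (Lemma~\ref{lem:a bil for}) gives
\[
|a(u_1-u_1^*,z-\eta_z)|\le C\norm{u_1-u_1^*}{\al}\,\norm{z-\eta_z}{\al}.
\]
Apply Proposition~\ref{pro:app pro} to $z\in H^{2\al-t}$ with $t^*=\al$, $s^*=2\al-t$: this needs $\al\le 2\al-t\le 2\tau$ and $\al\le\tau$, i.e. $t\le\al$ and $t\ge 2\al-2\tau=2(\al-\tau)$ --- exactly the stated range --- yielding $\norm{z-\eta_z}{\al}\le Ch_X^{\al-t}\norm{z}{2\al-t}\le Ch_X^{\al-t}\norm{w}{2\al-t}$. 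Combining with the base case estimate $\norm{u_1-u_1^*}{\al}\le Ch_X^{s-\al}\norm{u_1}{s}$ and taking the supremum over $w$ gives $\norm{u_1-u_1^*}{t}\le Ch_X^{(s-\al)+(\al-t)}\norm{u_1}{s}=Ch_X^{s-t}\norm{u_1}{s}$, as required.

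The main obstacle I anticipate is the careful handling of the kernel $\kerL$ throughout the duality step: one must verify that the auxiliary function $z$ can be chosen in $(\kerL)^\perp_{H^{2\al-t}}$ so that the pairing $\inprod{u_1-u_1^*}{w}_\al$ is faithfully represented (the components of $w$ in $\kerL$ contribute nothing because $u_1-u_1^*\in(\kerL)^\perp_{H^\al}$, so one may as well replace $w$ by its projection off $\kerL$ without changing the numerator while only decreasing the denominator), and that the norm equivalence \eqref{equ:equi norm} is applied with the correct Sobolev index $2\al-t$ rather than $\al$. A secondary technical point is the requirement that $h_X$ be sufficiently small: this is inherited from the $h_X\le h_0$ restriction in Proposition~\ref{pro:app pro}, so no new smallness condition is actually introduced. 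All the hypotheses on $\phi$ --- namely \eqref{equ:con hat phi}, \eqref{equ:tau al Gal}, $\tau\ge\al$, $\tau>(n-1)/2$ --- are precisely what is needed to invoke Proposition~\ref{pro:app pro} at both exponents $s^*=s$ and $s^*=2\al-t$ and to guarantee $\cV^\phi\subset H^\al$ so that the Galerkin equation makes sense.
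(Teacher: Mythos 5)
Your proposal is correct and follows essentially the same route as the paper: C\'ea's Lemma plus Proposition~\ref{pro:app pro} (with $t^*=\al$, $s^*=s$) for the energy norm, then Aubin--Nitsche duality via \eqref{equ:neg nor} with the pairing $\inpro{\cdot}{\cdot}_\al$ against $H^{2\al-t}$, Galerkin orthogonality \eqref{equ:u1 u1s v}, and a second application of Proposition~\ref{pro:app pro} with $t^*=\al$, $s^*=2\al-t$, which is exactly where the restriction $2(\al-\tau)\le t\le\al$ enters. Your explicit auxiliary function $z$ (rescaling Fourier coefficients by $(\ell+1)^{2\al}/\Lhe$ off $\kerL$) is just a more detailed justification of the paper's terse step ``it follows from \eqref{equ:neg nor} and \eqref{equ:strongly ellip}'', so no substantive difference arises.
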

\begin{proof}
The result for the case when $t=\al$ is a direct consequence of
Lemma~\ref{lem:u1 u1s v} and Proposition~\ref{pro:app pro} (with $t^* = \alpha$
and $s^* = s$).

The proof for the case $t<\al$ is standard, using Aubin--Nitsche's trick,
and is included here for completeness. 
It follows from \eqref{equ:neg nor} and 
\eqref{equ:strongly ellip} that
\[
\norm{u_1-u_1^*}{t}
\leq 
\sup_{{v\in H^{2\al-t} \atop v \not= 0}}
\frac{\inpro{u_1- u_1^*}{v}_\al}{\norm{v}{2\al-t}}
\le
C \sup_{{v\in H^{2\al-t} \atop v \not= 0}}
\frac{a(u_1- u_1^*,v)}{\norm{v}{2\al-t}}.
\]
By using successively \eqref{equ:u1 u1s v}, Lemma~\ref{lem:a bil for},
\eqref{equ:brit 1} with $t$ replaced by $\alpha$, and~\eqref{equ:ame 1},
we deduce for any $\eta\in\cV^{\phi}$
\begin{align}\label{equ:hn2}
\norm{u_1-u_1^*}{t}
&
\le
C \sup_{{v\in H^{2\al-t} \atop v \not= 0}}
\frac{a(u_1- u_1^*, v-\eta)}{\norm{v}{2\al-t}}
\le
C \norm{u_1- u_1^*}{\al}
\sup_{{v\in H^{2\al-t} \atop v \not= 0}}
\frac{\norm{v-\eta}{\al}}{\norm{v}{2\al-t}}
\nonumber \\
&
\le 
C h_X^{s-\alpha}\norm{u_1}{s}
\sup_{{v\in H^{2\al-t} \atop v \not= 0}}
\frac{\norm{v-\eta}{\al}}{\norm{v}{2\al-t}}.
\end{align}
Since $2(\al-\tau) \le t < \al$, there holds $\al < 2\al-t\le2\tau$.
By invoking Proposition~\ref{pro:app pro} again with $t^*$ and $s^*$ replaced
by $\alpha$ and $2\alpha-t$, respectively, we can choose
$\eta\in \cV^\phi$ satisfying
\begin{equation}\label{equ:ame 1}
\norm{v-\eta}{\al}
\le
C h_X^{\al-t} \norm{v}{2\al-t}.
\end{equation}
This together with \eqref{equ:hn2} yields the required
estimate, proving the lemma.
\end{proof}

We are now ready to state and prove the main result of this section.
\begin{theorem}\label{the:Gal}
Assume that the shape function
$\phi$ is chosen to 
satisfy~\eqref{equ:con hat phi},~\eqref{equ:tau al Gal} and
$\tau\ge\al$,  $\tau>(n-1)/2$. 
Assume further that
$u\in H^s$ for some $s$ satisfying 
$\al\le s\le 2\tau$.
If $\mu_i\in H^{-t}$ for $i=1,\ldots, M$ with
$t\in\R$ satisfying
$2(\al-\tau)\le t \leq \al$, then
for $h_X$ sufficiently small there holds
\begin{equation*}
\norm{u-\wtd u}{t}\
\leq\
Ch_X^{s-t}\,\norm{u}{s}.
\end{equation*}
The constant $C$ is independent of $u$ and $h_X$.
\end{theorem}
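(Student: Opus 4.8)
The plan is to reduce the claim for the full solution $u$ and its approximation $\wtd u$ to the already-established estimate for the ``interior'' parts $u_1$ and $u_1^*$. The key link is Lemma~\ref{lem:u u1}: under the hypothesis $\mu_i\in H^{-t}$ for $i=1,\ldots,M$, it tells us that
\[
\norm{u-\wtd u}{t}
\le
C\,\norm{u_1-u_1^*}{t},
\]
so it suffices to bound $\norm{u_1-u_1^*}{t}$. This is precisely the content of Lemma~\ref{lem:hn1}, provided its hypotheses are met.

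First I would check that all the hypotheses of Lemma~\ref{lem:hn1} are in force. The assumptions on $\phi$ (namely~\eqref{equ:con hat phi},~\eqref{equ:tau al Gal}, $\tau\ge\al$, and $\tau>(n-1)/2$) are assumed verbatim in the theorem. The range condition $2(\al-\tau)\le t\le\al$ on $t$ is also assumed verbatim. The remaining hypothesis is that $u_1\in H^s$ with $\al\le s\le 2\tau$; this follows from the theorem's assumption $u\in H^s$ together with the decomposition~\eqref{equ:u u0 u1}, since $u_1$ is the component of $u$ in $(\kerL)^\perp_{H^{\sigma+\al}}$ and $u_0\in\kerL$ is a finite linear combination of spherical harmonics, hence smooth, so $u_1=u-u_0\in H^s$ with $\norm{u_1}{s}\le C\norm{u}{s}$ (the constant absorbing the finite-dimensional correction). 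Here one should note $s\ge\al$ ensures $u_1\in H^\al$ so that $u_1$ is well-defined in the sense of~\eqref{equ:Lu1}.

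Then I would simply apply Lemma~\ref{lem:hn1} to get $\norm{u_1-u_1^*}{t}\le C h_X^{s-t}\norm{u_1}{s}$ for $h_X$ sufficiently small, combine with $\norm{u_1}{s}\le C\norm{u}{s}$ and with Lemma~\ref{lem:u u1}, and conclude
\[
\norm{u-\wtd u}{t}
\le
C\,\norm{u_1-u_1^*}{t}
\le
C h_X^{s-t}\norm{u_1}{s}
\le
C h_X^{s-t}\norm{u}{s}.
\]

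There is essentially no obstacle here: the theorem is a packaging of Lemmas~\ref{lem:u u1} and~\ref{lem:hn1}, and the only thing requiring a word of justification is the passage from $u\in H^s$ to $u_1\in H^s$ with a controlled norm, which is immediate from the finite-dimensionality of $\kerL$. If anything, the mild subtlety worth flagging is that $u_1^*$ need not lie in $\cV^\phi$ (as emphasised after~\eqref{equ:wtd u}), but Lemma~\ref{lem:hn1} has already been proved accommodating exactly this, so nothing further is needed.
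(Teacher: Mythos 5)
Your proposal is correct and follows essentially the same route as the paper: reduce via Lemma~\ref{lem:u u1} to $\norm{u_1-u_1^*}{t}$ and then invoke Lemma~\ref{lem:hn1}. The only cosmetic difference is your bound $\norm{u_1}{s}\le C\norm{u}{s}$ via smoothness of $u_0$; since the splitting \eqref{equ:u u0 u1} separates Fourier coefficients with $\ell\in\cK(L)$ from those with $\ell\notin\cK(L)$, one in fact has $\norm{u_1}{s}\le\norm{u}{s}$ directly, which is what the paper uses.
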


\begin{proof}
Since $\mu_i\in H^{-t}$ for $i=1,\ldots, M$, Lemma~\ref{lem:u u1}
gives
\[
\norm{u-\wtd u}{t}
\le 
C
\norm{u_1-u_1^*}{t}.
\]
The required result is a consequence of Lemma~\ref{lem:hn1},
noting that $\norm{u_1}{s}\le\norm{u}{s}$.
\end{proof}

\section{Collocation approximation}\label{sec:Col Met}

Recall that for this method it is assumed that $g\in H^{\sigma-\alpha}$ for some
positive $\sigma$
so that $u\in H^{\sigma+\alpha}$; see Problem~A. 
We will assume that 
\begin{equation}\label{equ:sig tau col con}
\max\{2\alpha, \alpha\} +\frac{n-1}{2}
< \tau \le \min\{\sigma-\alpha,\sigma\}.
\end{equation}
Recall that~\eqref{equ:con hat phi} implies $\cN_\phi\simeq H^\tau$.
Thus, the condition $\sigma-\alpha\ge \tau$ assures us that $g\in
\cN_\phi$. The condition $2\alpha +(n-1)/2 < \tau$ is to assure that $L\wtd
u_1\in\cN_\phi$. Indeed, this condition implies $\wtd u_1\in\cV^\phi\subset
H^{\tau+2\alpha}$ which is equivalent to $L\wtd u_1\in H^\tau\simeq
\cN_\phi$.

The functions $L\wtd u_1$ and $g$ are required to be in the native space
$\cN_\phi$ so that property~\eqref{equ:rep ker} can be used. The 
conditions $\alpha+(n-1)/2 \le \tau$
and $\tau\le\sigma$ are purely technical requirements of our proof.

In this method we find
$\wtd u_1 \in \cV^\phi$ 
by solving the collocation equation
\begin{equation}\label{equ:col equ}
L\wtd u_1(\vecx_j)
=
g(\vecx_j),
\quad
j=1,\ldots,N.
\end{equation}
By writting $\wtd u_1 = \sum_{j=1}^N c_j\Phi_j$, we derive from~\eqref{equ:col
equ} the matrix equation $\vecA^{(C)} \vecc =\vecg$ where 
%
\[
\vecA^{(C)}_{ij}
=
L\Phi_i(\vecx_j)
=
\sum_{\ell=0}^\infty
\sum_{m=1}^{N(n,\ell)} 
\wth L(\ell)  \, \wth \phi(\ell) \, 
{Y_{\ell,m}(\vecx_i)} \, Y_{\ell,m}(\vecx_j),
\]
$\vecc = (c_1,\ldots, c_N)$ and $\vecg = (g(\vecx_1, \ldots, g(\vecx_N))$.
The symmetry and positive definiteness of 
the matrix $\vecA^{(C)}$  can be proved in the same manner as 
Lemma~\ref{lem:A SG}.

Since the function $\Phi$ defined  as in
\eqref{equ:Phi res} is a reproducing kernel for the Hilbert
space~$\cN_\phi$, see \eqref{equ:rep ker}, the collocation
equation \eqref{equ:col equ} can be rewritten as a Galerkin
equation. This allows us to carry out error analysis in the
same manner as in Section~\ref{sec:Gal}.

Recalling \eqref{equ:rep ker} and noting that $L\wtd u_1, g\in \cN_\phi$, we
rewrite \eqref{equ:col
equ} as
\begin{equation}\label{equ:u1 phi}
\inprod{L\wtd u_1}{\Phi_j}_\phi
=
\inprod{g}{\Phi_j}_\phi,
\quad
j=1,\ldots, N.
\end{equation}
In order to see that the above equation is a Galerkin
equation, we introduce a new finite-dimensional subspace
$\cV^{\wtd{\phi}}$ :
\[
\cV^{\wtd{\phi}}
:=
\spann\{\wtd{\Phi}_1,\ldots,\wtd{\Phi}_N\},
\]
where the spherical radial basis functions $\wtd \Phi_j$ are defined 
by
\[
\wtd{\Phi}_j(\vecx)
:=
\wtd{\phi}(\vecx\cdot\vecx_j),
\quad j=1,\ldots,N.
\]
Here, $\wtd\phi$ is a shape function given by
\[
\wtd{\phi}(t)
:=
\sum_{\ell=0}^\infty 
\omega_n^{-1} N(n,\ell)
\big[\wth\phi(\ell)\big]^{1/2}
P_\ell(n;t),
\]
It is easily seen that (cf. \eqref{equ:Phi hat})
\begin{equation}\label{equ:Psi hat}
\wth{(\wtd{\Phi}_j)}_{\ell,m}
=
[\wth\phi(\ell)]^{1/2} \,
{Y_{\ell,m}(\vecx_j)},
\quad j=1,\ldots,N.
\end{equation}
It should be noted that this space $\cV^{\wtd\phi}$ is
introduced purely for analysis purposes; it is
 not to be used in the
implementation.
Since (cf. \eqref{equ:con hat phi})
\[
c_1 (\ell+1)^{-\tau}
\le
\wth{(\wtd{\phi})}(\ell)
\le
c_2 (\ell+1)^{-\tau},
\]
we have (cf. \eqref{equ:V Hs})
\begin{equation}\label{equ:V phi tld H}
\cV^{\wtd{\phi}} \subset H^s
\quad\text{for all } s < \tau+\frac{1-n}{2}.
\end{equation}
In particular, $\cV^{\wtd{\phi}}\subset H^{\al}$ due to 
$\alpha+(n-1)/2 < \tau$ (see~\eqref{equ:sig tau col con}).

The following lemma defines a weak equation 
equivalent to equation~\eqref{equ:Lu1}.

\begin{lemma}\label{lem:U1}
Let
\begin{equation}\label{equ:U1 Col Pet}
U_1
:=
\sum_{\ell\notin\cK(L)}
\sum_{m=1}^{N(n,\ell)}
\frac{(\wth u_1)_{\ell,m}}{\big[\wth\phi(\ell)\big]^{1/2}}
Y_{\ell,m},
\end{equation}
where $u_1$ is the solution to \eqref{equ:Lu1}.
Then $U_1$ belongs to $H^{\sigma+\al-\tau}$ and satisfies
\begin{equation}\label{equ:LU1}
a(U_1,V)
=
\inpro{G}{V}
\quad\text{for all }\, V\in H^{\al-\sigma+\tau},
\end{equation}
where
\begin{equation}\label{equ:G def g}
G
:=
\sum_{\ell=0}^\infty
\sum_{m=1}^{N(n,\ell)}
\frac{\wth g_{\ell,m}}{\big[\wth\phi(\ell)\big]^{1/2}}
Y_{\ell,m}.
\end{equation}
\end{lemma}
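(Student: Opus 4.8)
The plan is to verify the claim by directly matching Fourier coefficients, exploiting the fact that the map $u_1 \mapsto U_1$ given by \eqref{equ:U1 Col Pet} is a weighted rescaling on the Fourier side with weights $[\wth\phi(\ell)]^{-1/2} \simeq (\ell+1)^{\tau}$ (by \eqref{equ:con hat phi}), restricted to $\ell \notin \cK(L)$. First I would check the regularity claim $U_1 \in H^{\sigma+\al-\tau}$: since $u_1 \in H^{\sigma+\al}$ (it is the component of the solution $u$ of Problem~A, see \eqref{equ:u u0 u1} and the paragraph following it), we have $\sum (\ell+1)^{2(\sigma+\al)}|(\wth u_1)_{\ell,m}|^2 < \infty$; multiplying the $\ell$-th term by $(\ell+1)^{-2\tau} \simeq \wth\phi(\ell)$ and noting $(\wth U_1)_{\ell,m} = (\wth u_1)_{\ell,m}/[\wth\phi(\ell)]^{1/2}$ for $\ell \notin \cK(L)$ (and $0$ otherwise), one gets $\sum (\ell+1)^{2(\sigma+\al-\tau)}|(\wth U_1)_{\ell,m}|^2 \simeq \sum (\ell+1)^{2(\sigma+\al)}|(\wth u_1)_{\ell,m}|^2 < \infty$, so $U_1 \in H^{\sigma+\al-\tau}$ as required. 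The same bookkeeping shows $G \in H^{\sigma-\al-\tau}$ (using $g \in H^{\sigma-\al}$ from \eqref{equ:g con}), which is exactly the dual space needed so that $\inpro{G}{V}$ makes sense for $V \in H^{\al-\sigma+\tau}$ — noting $(\sigma-\al-\tau) + (\al-\sigma+\tau) = 0$ pairs correctly, and more precisely the duality product $\inpro{\cdot}{\cdot}_0$ between these two spaces is well-defined.

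Next I would establish \eqref{equ:LU1}. The cleanest route is to write both sides out in Fourier coefficients. By definition of the bilinear form \eqref{equ:bilinear srbf} and of $L$,
\[
a(U_1,V) = \inpro{LU_1}{V} = \sum_{\ell=0}^\infty \sum_{m=1}^{N(n,\ell)} \wth L(\ell)\, (\wth U_1)_{\ell,m}\, \overline{(\wth V)_{\ell,m}}.
\]
Since $\wth L(\ell) = 0$ for $\ell \in \cK(L)$, only $\ell \notin \cK(L)$ contribute, and there $(\wth U_1)_{\ell,m} = (\wth u_1)_{\ell,m}/[\wth\phi(\ell)]^{1/2}$, so the sum equals $\sum_{\ell\notin\cK(L)}\sum_m \wth L(\ell)\,(\wth u_1)_{\ell,m}\,[\wth\phi(\ell)]^{-1/2}\,\overline{(\wth V)_{\ell,m}}$. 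On the other hand, $\inpro{G}{V} = \sum_{\ell}\sum_m \wth g_{\ell,m}\,[\wth\phi(\ell)]^{-1/2}\,\overline{(\wth V)_{\ell,m}}$; since $\wth g_{\ell,m} = 0$ for $\ell\in\cK(L)$ by \eqref{equ:g con}, this also reduces to a sum over $\ell\notin\cK(L)$. Comparing term by term, the identity \eqref{equ:LU1} holds provided $\wth L(\ell)\,(\wth u_1)_{\ell,m} = \wth g_{\ell,m}$ for all $\ell\notin\cK(L)$ and all $m$ — but that is precisely the statement $Lu_1 = g$, which is \eqref{equ:Lu1} read coefficientwise (recall $u_1 = L^{-1}g$ from the proof of Proposition~\ref{the:ext uni}, equivalently \eqref{equ:Lu1} together with $u_1 \in (\kerL)^\perp_{H^{\sigma+\al}}$).

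The one genuine subtlety — and the step I expect to need the most care — is justifying the rearrangement and convergence of these series, i.e. that the pairing $\inpro{LU_1}{V}$ is legitimately computed term by term for $V$ only in $H^{\al-\sigma+\tau}$ rather than in some $H^s$ with $s$ large. Here I would invoke the convention set up in the excerpt right after \eqref{equ:Sob nor}: the bilinear series $\sum (\ell+1)^{2s}\wh{a}_{\ell,m}\overline{\wh{b}_{\ell,m}}$ converges and defines the duality pairing whenever $a \in H^{s+\rho}$ and $b \in H^{s-\rho}$ for some $\rho > 0$. With $s = \al - (\text{something})$, one checks that $LU_1 \in H^{\sigma-\al-\tau}$ (from $U_1 \in H^{\sigma+\al-\tau}$ and $L$ of order $2\al$) pairs against $V \in H^{\al-\sigma+\tau}$ as a duality product between $H^{-(\sigma-\al-\tau) }$... — more transparently, $\inpro{LU_1}{V} = \inpro{LU_1}{V}_0$ is the duality pairing between $H^{\sigma-\al-\tau}$ and $H^{-(\sigma-\al-\tau)} = H^{\al-\sigma+\tau}$, absolutely convergent by Cauchy–Schwarz \eqref{equ:CS s} applied at index $s$ chosen so the exponents split symmetrically. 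With that convergence secured, the term-by-term comparison above is rigorous and the lemma follows. I would also remark that the exponents were arranged precisely so that $U_1$ and the test space $H^{\al-\sigma+\tau}$ sit symmetrically about $H^\al$ in the same way $u_1$ and $H^{\al-\sigma}$ do in \eqref{equ:Lu1}, which is why \eqref{equ:LU1} is the faithful ``rescaled'' analogue of \eqref{equ:Lu1}.
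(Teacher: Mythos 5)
Your proof is correct, but it takes a slightly different route from the paper's. You expand both sides of \eqref{equ:LU1} in Fourier coefficients and compare them term by term, reducing the identity to the coefficientwise relation $\wth L(\ell)\,(\wth u_1)_{\ell,m}=\wth g_{\ell,m}$ for $\ell\notin\cK(L)$ (which indeed follows from \eqref{equ:Lu1} tested with $v=\Ylm\in H^{\al-\sigma}$, or equivalently from $u_1=L^{-1}g$), and you then spend effort justifying the absolute convergence of the pairing $\inpro{LU_1}{V}$ via the duality convention and Cauchy--Schwarz. The paper instead keeps everything at the level of the weak formulation: given $V\in H^{\al-\sigma+\tau}$ it defines the rescaled test function $v:=\sum_{\ell,m}\wth V_{\ell,m}[\wth\phi(\ell)]^{-1/2}\Ylm$, observes $v\in H^{\al-\sigma}$, and then gets \eqref{equ:LU1} in two steps, $a(U_1,V)=a(u_1,v)=\inpro{g}{v}=\inpro{G}{V}$, by a single application of \eqref{equ:Lu1}. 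The two arguments rest on the same rescaling observation (the weight $[\wth\phi(\ell)]^{-1/2}\simeq(\ell+1)^{\tau}$ can be moved between the trial and test sides of the pairing); the paper's version is shorter and avoids any separate discussion of term-by-term manipulation because the convergence issues are absorbed into the already-established boundedness of $a(\cdot,\cdot)$ on $H^{\al+s}\times H^{\al-s}$, while yours is more explicit and self-contained, in particular spelling out the memberships $U_1\in H^{\sigma+\al-\tau}$ and $G\in H^{\sigma-\al-\tau}$ that the paper dismisses as "easily seen."
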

\begin{proof}
Since $u_1\in H^{\sigma+\al}$, it is easily seen that
$U_1\in H^{\sigma+\al-\tau}$.
For any $V\in H^{\al-\sigma+\tau}$ there holds
\[
a(U_1,V)
=
a(u_1,v),
\]
where
\[
v
:=
\sum_{\ell=0}^\infty
\sum_{m=1}^{N(n,\ell)}
\frac{\wth{V}_{\ell,m}}{\big[\wth\phi(\ell)\big]^{1/2}}
Y_{\ell,m}.
\]
Noting $v\in H^{\al-\sigma}$ we deduce from \eqref{equ:Lu1}
that
\[
a(U_1,V)
=
\inpro{g}{v}
=
\inpro{G}{V},
\]
finishing the proof of the lemma. 
\end{proof}

Analogously, the next lemma defines an equivalent to~\eqref{equ:u1 phi}. It
will be seen later that this equivalent is the Galerkin approximation
to~\eqref{equ:LU1}.

\begin{lemma}\label{lem:wtd U1}
Let
\begin{equation}\label{equ:U1t col}
\wtd U_1
:=
\sum_{\ell=0}^\infty
\sum_{m=1}^{N(n,\ell)}
\frac{\wth{(\wtd
u_1)}_{\ell,m}}{\big[\wth\phi(\ell)\big]^{1/2}}
Y_{\ell,m}
\end{equation}
where $\wtd u_1$ is given by \eqref{equ:col equ}.
Then $\wtd U_1$ belongs to $\cV^{\wtd{\phi}}$ and satisfies
\begin{equation}\label{equ:LU1 wtd}
a(\wtd U_1,\wtd{\Phi}_j)
=
\inprod{G}{\wtd{\Phi}_j},
\quad
j=1,\ldots, N.
\end{equation}
\end{lemma}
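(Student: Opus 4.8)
The plan is to verify the two assertions by elementary manipulation of Fourier coefficients; the lemma is essentially a bookkeeping identity rewriting the collocation equation~\eqref{equ:col equ}. Write the collocation solution as $\wtd u_1=\sum_{j=1}^N c_j\Phi_j$, where $\vecc=(c_1,\ldots,c_N)$ solves $\vecA^{(C)}\vecc=\vecg$. By~\eqref{equ:Phi hat}, $\wth{(\wtd u_1)}_{\ell,m}=\phe\sum_{j=1}^N c_j\Ylmxj$, and since~\eqref{equ:con hat phi} gives $\phe>0$ for every $\ell$, dividing by $[\phe]^{1/2}$ and using~\eqref{equ:Psi hat} yields
\[
\frac{\wth{(\wtd u_1)}_{\ell,m}}{[\phe]^{1/2}}
=
[\phe]^{1/2}\sum_{j=1}^N c_j\Ylmxj
=
\sum_{j=1}^N c_j\,\wth{(\wtd{\Phi}_j)}_{\ell,m}.
\]
Matching this against the definition~\eqref{equ:U1t col} of $\wtd U_1$ shows $\wtd U_1=\sum_{j=1}^N c_j\wtd{\Phi}_j$, hence $\wtd U_1\in\cV^{\wtd\phi}$.

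For~\eqref{equ:LU1 wtd} I would establish, for each fixed $j$, the chain
\[
a(\wtd U_1,\wtd{\Phi}_j)
=
\inprod{L\wtd u_1}{\Phi_j}_\phi
=
\inprod{g}{\Phi_j}_\phi
=
\inprod{G}{\wtd{\Phi}_j},
\]
where the middle equality is precisely~\eqref{equ:u1 phi}, that is, the collocation equation~\eqref{equ:col equ} rewritten via the reproducing property~\eqref{equ:rep ker}. Both outer equalities are obtained by expanding in Fourier series and cancelling the factors $[\phe]^{\pm 1/2}$. Indeed, using the series representation $a(\wtd U_1,\wtd{\Phi}_j)=\slinN\Lhe\,\wth{(\wtd U_1)}_{\ell,m}\,\wth{(\wtd{\Phi}_j)}_{\ell,m}$ of the bilinear form~\eqref{equ:bilinear srbf} together with~\eqref{equ:U1t col} and~\eqref{equ:Psi hat},
\[
a(\wtd U_1,\wtd{\Phi}_j)
=
\slinN\Lhe\,\frac{\wth{(\wtd u_1)}_{\ell,m}}{[\phe]^{1/2}}\,[\phe]^{1/2}\Ylmxj
=
\slinN\Lhe\,\wth{(\wtd u_1)}_{\ell,m}\,\Ylmxj
=
\inprod{L\wtd u_1}{\Phi_j}_\phi,
\]
the last step using $\wth{(L\wtd u_1)}_{\ell,m}=\Lhe\,\wth{(\wtd u_1)}_{\ell,m}$, $\wth{(\Phi_j)}_{\ell,m}=\phe\Ylmxj$ and the definition of $\inprod{\cdot}{\cdot}_\phi$; the equality $\inprod{g}{\Phi_j}_\phi=\slinN\wth g_{\ell,m}\Ylmxj=\inprod{G}{\wtd{\Phi}_j}$ follows in the same way from~\eqref{equ:G def g} and~\eqref{equ:Psi hat}.

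The lemma poses no real obstacle; the one point needing care is that the pairings above are well defined and coincide with their Fourier-series expressions, and this is exactly what the hypotheses collected in~\eqref{equ:sig tau col con} provide. By~\eqref{equ:V phi tld H}, both $\wtd U_1$ and $\wtd{\Phi}_j$ lie in $\cV^{\wtd\phi}\subset H^{s}$ for every $s<\tau+(1-n)/2$; since $\tau>\alpha+(n-1)/2$ one may choose $s$ with $\wtd U_1\in H^{\alpha+s}$ and $\wtd{\Phi}_j\in H^{\alpha-s}$, so $a(\wtd U_1,\wtd{\Phi}_j)$ is meaningful by Lemma~\ref{lem:a bil for}(1), while $\tau\le\sigma$ makes $\inprod{G}{\wtd{\Phi}_j}$ a legitimate duality product. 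Moreover, as observed after~\eqref{equ:sig tau col con}, $2\alpha+(n-1)/2<\tau$ forces $\wtd u_1\in\cV^\phi\subset H^{\tau+2\alpha}$, hence $L\wtd u_1\in H^\tau\simeq\cN_\phi$, and $\sigma-\alpha\ge\tau$ gives $g\in\cN_\phi$; these are what license~\eqref{equ:u1 phi} and the use of~\eqref{equ:rep ker}. Finally, the rearrangements and the cancellation of $[\phe]^{\pm1/2}$ are justified by absolute convergence of the series, which follows from $N(n,\ell)=O(\ell^{n-2})$, the symbol bound~\eqref{equ:strongly ellip}, and $\phe\simeq(\ell+1)^{-2\tau}$ with $\tau>\max\{2\alpha,\alpha\}+(n-1)/2$.
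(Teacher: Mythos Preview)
Your proof is correct and follows essentially the same route as the paper: both establish $\wtd U_1=\sum_{j=1}^N c_j\wtd{\Phi}_j$ by computing Fourier coefficients from $\wtd u_1=\sum_{j=1}^N c_j\Phi_j$, and both derive~\eqref{equ:LU1 wtd} via the chain $a(\wtd U_1,\wtd{\Phi}_j)=\inprod{L\wtd u_1}{\Phi_j}_\phi=\inprod{g}{\Phi_j}_\phi=\inprod{G}{\wtd{\Phi}_j}$, the middle equality being~\eqref{equ:u1 phi}. Your additional paragraph on well-definedness of the pairings is more explicit than the paper, which simply invokes the relevant equation labels, but the argument is the same.
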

\begin{proof}
Since $\wtd u_1\in \cV^\phi$ we have
$\wtd u_1
=
\sum_{j=1}^N
c_j \Phi_j$ for some $c_j \in \R$,
which together with \eqref{equ:Phi hat} implies
\[
\wth{(\wtd u_1)}_{\ell,m}
=
\wth\phi(\ell)
\sum_{j=1}^N
c_j {Y_{\ell,m}(\vecx_j)}.
\]
This in turn gives
\[
\wth{(\wtd U_1)}_{\ell,m}
=
[\wth\phi(\ell)]^{1/2}
\sum_{j=1}^N
c_j {Y_{\ell,m}(\vecx_j)},
\]
so that (see \eqref{equ:Psi hat})
\[
\wtd U_1
=
\sum_{j=1}^N
c_j \wtd{\Phi}_j,
\]
i.e., $\wtd U_1 \in \cV^{\wtd{\phi}}$.
By using successively \eqref{equ:bilinear srbf}, \eqref{equ:Psi hat},
\eqref{equ:U1t col}, \eqref{equ:u1 phi}, \eqref{equ:Phi hat} and~\eqref{equ:G
def g}, we deduce
\[
a(\wtd U_1,\wtd{\Phi}_j)
=
\inpro{L\wtd U_1}{\wtd{\Phi}_j}
=
\inpro{L\wtd u_1}{\Phi_j}_\phi
=
\inpro{g}{\Phi_j}_\phi
=
\inpro{G}{\wtd{\Phi}_j},
\quad j=1,\ldots,N,
\]
completing the proof of the lemma. 
\end{proof}

Using the two above lemmas we can now estimate the error in the collocation
approximation in the same manner as for the Galerkin approximation.

\begin{theorem}\label{the:str col}
Let 
\eqref{equ:sig tau col con} hold.
We choose the shape function $\phi$
such that~\eqref{equ:con hat phi} holds with
$\tau > n-1$. 
Assume further that $u\in H^s$ for some $s$ satisfying 
$\tau+\al\leq s\leq 2\tau$.
If $\mu_i\in H^{-t}$, $i=1,\ldots, M$ for some
$t$ satisfying $2\al\leq t\leq \tau+\al$, then
for $h_X$ sufficiently small there holds
\[
\norm{u-\wtd u}{t}
\leq 
C
h_X^{s-t}
\norm{u}{s}.
\]
The constant $C$ is independent 
of $u$ and $h_X$.
\end{theorem}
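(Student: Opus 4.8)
The strategy is to transfer the collocation problem to the auxiliary space $\cV^{\wtd\phi}$ via Lemmas~\ref{lem:U1} and~\ref{lem:wtd U1}, observe that $\wtd U_1$ is exactly the Galerkin approximation of $U_1$ out of $\cV^{\wtd\phi}$ for the bilinear form $a(\cdot,\cdot)$, and then re-run the Galerkin error analysis of Section~\ref{sec:Gal}. Concretely, equation~\eqref{equ:LU1} says $a(U_1,V)=\inpro{G}{V}$ for all $V\in H^{\al-\sigma+\tau}$, and since $\cV^{\wtd\phi}\subset H^{\al}\subset H^{\al-\sigma+\tau}$ (using $\sigma\ge\tau$ from~\eqref{equ:sig tau col con}), equation~\eqref{equ:LU1 wtd} is the Galerkin equation for this problem in the trial/test space $\cV^{\wtd\phi}$. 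The shape function $\wtd\phi$ satisfies $\wth{(\wtd\phi)}(\ell)\simeq(\ell+1)^{-\tau}$, so $\cV^{\wtd\phi}$ plays the role of $\cV^\phi$ with the parameter $\tau$ replaced by $\tau/2$; the hypothesis $\tau>n-1$ guarantees $\tau/2>(n-1)/2$ and, more importantly, $\tau/2 > \tfrac12(\al+(n-1)/2)$ is implied by $\tau > 2\al + (n-1)/2$ from~\eqref{equ:sig tau col con}, so $\cV^{\wtd\phi}\subset H^\al$ and Proposition~\ref{pro:app pro} applies with parameter $\tau/2$.

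First I would prove the analogue of Lemma~\ref{lem:u1 u1s v}: if we set $U_1^* := \sum_{\ell\notin\cK(L)}\sum_m \wth{(\wtd U_1)}_{\ell,m}Y_{\ell,m}$, then $U_1-U_1^*\in(\kerL)^\perp_{H^\al}$, the Galerkin orthogonality $a(U_1-U_1^*,V)=0$ holds for all $V\in\cV^{\wtd\phi}$, and hence by the coercivity in part~3 of Lemma~\ref{lem:a bil for} we get the C\'ea estimate $\norm{U_1-U_1^*}{\al}\le C\inf_{V\in\cV^{\wtd\phi}}\norm{U_1-V}{\al}$. Next I would run the Aubin--Nitsche argument exactly as in Lemma~\ref{lem:hn1}, but with $\cV^\phi$ replaced by $\cV^{\wtd\phi}$ and $\tau$ by $\tau/2$: from $U_1\in H^{s-\tau}$ (which follows from $u_1\in H^s$ and definition~\eqref{equ:U1 Col Pet}, since dividing the Fourier coefficients by $[\wth\phi(\ell)]^{1/2}\simeq(\ell+1)^{-\tau}$ shifts the Sobolev index by $-\tau$) and the range $\tau+\al\le s\le 2\tau$, one has $\al\le s-\tau\le\tau$, so Proposition~\ref{pro:app pro} (with parameter $\tau/2$, needing $s-\tau\le 2(\tau/2)=\tau$) yields
\begin{equation*}
\norm{U_1-U_1^*}{t-\tau}\le C h_X^{(s-\tau)-(t-\tau)}\norm{U_1}{s-\tau} = C h_X^{s-t}\norm{U_1}{s-\tau},
\qquad 2(\al-\tau/2)\le t-\tau\le\al,
\end{equation*}
i.e. for $2\al\le t\le\tau+\al$, which is precisely the hypothesised range for $t$. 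Here $2(\al-\tau/2)=2\al-\tau$, so $t-\tau\ge 2\al-\tau$ is $t\ge 2\al$, matching the assumption.

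Finally I would translate back: from the definitions~\eqref{equ:U1 Col Pet}, \eqref{equ:U1t col} one checks that multiplying the Fourier coefficients of $U_1-U_1^*$ by $[\wth\phi(\ell)]^{1/2}$ recovers $u_1-u_1^*$ (on the indices $\ell\notin\cK(L)$; on $\ell\in\cK(L)$ both $u_1$ and $u_1^*$ vanish), so $\norm{u_1-u_1^*}{t}\le C\norm{U_1-U_1^*}{t-\tau}$, and likewise $\norm{U_1}{s-\tau}\le C\norm{u_1}{s}\le C\norm{u}{s}$. Combining with the previous display gives $\norm{u_1-u_1^*}{t}\le C h_X^{s-t}\norm{u}{s}$. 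The hypothesis $\mu_i\in H^{-t}$ with $2\al\le t\le\tau+\al$ lets us invoke Lemma~\ref{lem:u u1} to conclude $\norm{u-\wtd u}{t}\le C\norm{u_1-u_1^*}{t}\le C h_X^{s-t}\norm{u}{s}$. The main obstacle is bookkeeping: verifying at each step that the various Sobolev indices ($\al-\sigma+\tau$, $s-\tau$, $t-\tau$, $2(\al-\tau/2)$, the upper bound $\tau=2(\tau/2)$ in Proposition~\ref{pro:app pro}) all fall in the permitted ranges, which is exactly what the somewhat opaque three-part condition~\eqref{equ:sig tau col con} together with $\tau>n-1$ is engineered to ensure — in particular $2\alpha+(n-1)/2<\tau$ is used both for $\cV^{\wtd\phi}\subset H^\al$ and, via $L\wtd u_1\in\cN_\phi$, for the reproducing-kernel rewriting~\eqref{equ:u1 phi} to be legitimate in the first place.
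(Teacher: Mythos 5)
Your proposal is correct and follows essentially the same route as the paper: reformulate the collocation equation as a Galerkin equation for $U_1$ in the auxiliary space $\cV^{\wtd\phi}$ (Lemmas~\ref{lem:U1} and~\ref{lem:wtd U1}), apply the Galerkin error analysis of Lemma~\ref{lem:hn1} with $\tau$ replaced by $\tau/2$, shift the Sobolev indices by $\tau$ via the equivalences $\norm{u_1-u_1^*}{t}\simeq\norm{U_1-U_1^*}{t-\tau}$ and $\norm{u_1}{s}\simeq\norm{U_1}{s-\tau}$, and conclude with Lemma~\ref{lem:u u1}. The only cosmetic difference is that you re-derive the C\'ea/Aubin--Nitsche steps in $\cV^{\wtd\phi}$ rather than citing Lemma~\ref{lem:hn1} verbatim, and the index bookkeeping (including $\tau/2\ge\al$, which follows from \eqref{equ:sig tau col con}) checks out as you indicate.
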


\begin{proof} 
Recall that $\wtd U_1\in\cV^{\wtd\phi}\subset H^{\alpha}$ and $U_1\in
H^{\sigma+\alpha-\tau}\subset H^{\alpha}$ since $\tau\le \sigma$;
see~\eqref{equ:sig tau col con}. 
Moreover,~\eqref{equ:LU1} and~\eqref{equ:LU1 wtd} imply
\[
a(U_1-\wtd U_1, \wtd \Phi_j)
=
0,
\quad
j=1,\ldots,N.
\]
Hence, $\wtd U_1\in\cV^{\wtd\phi}$ is the Galerkin 
approximation to $U_1$. 

Analogously to~\eqref{equ:u1s}
we define 
\begin{equation}\label{equ:brit 2}
U_1^* = \slnotinK \wth{(\wtd U_1)}\Ylm.
\end{equation}
Lemma~\ref{lem:hn1}
with $\cV^\phi$ replaced by $\cV^{\wtd\phi}$ (and therefore,
$\tau$ replaced by $\wtd\tau:=\tau/2$) and $u_1, u_1^*$ replaced by $U_1,
U_1^*$,
 gives
\begin{equation}\label{equ:brit 3}
\norm{U_1- U_1^*}{\wtd t}
\leq
C
h_X^{\wtd s- \wtd t}
\norm{U_1}{\wtd s},
\quad
\alpha\le \wtd s\le 2\wtd \tau,\
2(\alpha-\wtd\tau)\le \wtd t\le \alpha.
\end{equation}
By the definition of $U_1, \wtd U_1$ and $U_1^*$, see~\eqref{equ:U1 Col Pet},
\eqref{equ:U1t col} and~\eqref{equ:brit 2}, we have 
\begin{equation}\label{equ:brit 4}
\norm{u_1 - u_1^*}{t}
\simeq 
\norm{U_1 - U_1^*}{t-\tau}
\
\text{and}
\
\norm{u_1}{s}
\simeq
\norm{U_1}{s - \tau}.
\end{equation}
Since $t$ and $s$ satisfy $2\alpha\le t\le \tau+\alpha$ and $\tau+\alpha\le s\le
2\tau$ so that $t-\tau$ and  $s-\tau$ satisfy
\[
2(\alpha-\wtd\tau)
\le
t-\tau\le \alpha
\quad\text{and}\quad
\alpha\le s-\tau\le 2\wtd\tau,
\]
the inequality~\eqref{equ:brit 3} with $\wtd t =t-\tau$ and $\wtd
s=s-\tau$ gives
\[
\norm{U_1- U_1^*}{t -\tau}
\leq
C
h_X^{ s-  t}
\norm{U_1}{s-\tau}.
\]
This together with~\eqref{equ:brit 4} implies
\[
\norm{u_1 - u_1^*}{t}
\leq
C
h_X^{ s-  t}
\norm{u_1}{s} 
\]
Since $\mu_i\in H^{-t}$, for $i=1,\ldots, M$, by using Lemma~\ref{lem:u u1} and
noting that $\norm{u_1}{s}
\le \norm{u}{s}$, we deduce
\[
\norm{u-\wtd u}{t}
\le
C
\norm{u_1 - u_1^*}{t}
\leq
C
h_X^{ s-  t}
\norm{u_1}{s} 
\le 
C
h_X^{ s-  t}
\norm{u}{s}, 
\]
completing the proof of the theorem.

\end{proof}

\begin{remark}
{\rm
In comparison with the results obtained 
by Morton and Neamtu, our error estimates 
for the collocation approximation
cover a wider range of Sobolev norms. In fact, these two 
authors only proved~\cite{Mor00}
\[
\norm{u-\wtd u}{2\alpha}
\le
ch_X^{\lfloor 2(\tau-\al) \rfloor}
\norm{u}{2\tau}.
\]
This is a special case of the results in
Theorems~\ref{the:str col}.
}
\end{remark}

\section{Numerical experiments}\label{sec:num exp srbf}

In this section, we solved the Dirichlet problem
\begin{equation}\label{equ:Dirichlet problem}
\begin{aligned}
\Delta U 
&
= 0 \ \text{in}\ \mathbb B_e,
\\
U 
&
= U_D \ \text{on}\ \mS,
\\
U(\vecx) 
&
= O(1/\snorm{\vecx}{})\ \text{as}\ \snorm{\vecx}{}\goto\infty,
\end{aligned}
\end{equation}
where $\mathbb B_e:=\{\vecx\in \R^3 : \snorm{\vecx}{}> 1\}$. 
It is well-known, see e.g.~\cite{PhaTraLeG08}, that the
problem~\eqref{equ:Dirichlet problem}
is equivalent to  
\begin{equation}\label{equ:Su f equation}
Su  = g \ \text{on}\ \mS,
\end{equation}
where
\begin{equation}\label{equ:g def 1s}
g = -\frac{1}{2} U_D + DU_D,
\end{equation}
and
\[
Dv(\vecx)
=
\frac{1}{4\pi}
\int_{\mS}
v(\vecy)
\frac{\partial}{\partial\nu_{\vecy}}
\frac{1}{\snorm{\vecx-\vecy}{}}\,d\sigma_{\vecy}.
\]
Here, $S$ is the weakly singular integral operator 
defined by
\[
Sv(\vecx)
=
\frac{1}{4\pi}
\int_{\mSt}
\frac{v(\vecy)}{\snorm{\vecx-\vecy}{}}\,d\sigma_{\vecy},
\]
which is a
pseudodifferential operator of order $-1$ and $\wth S(\ell) =
1/(2\ell+1)$; see the examples following Definition~\ref{def:L}.

We solved the problem~\eqref{equ:Dirichlet problem} with the boundary data
\[
U_D(\vecx)
:=
U_D(x_1,x_2,x_3) 
=
\frac{1}{(1.0625 - 0.5x_3)^{1/2}}
\]
so that the exact solution to the Dirichlet
problem~\eqref{equ:Dirichlet problem} is given by 
\[
U(\vecx) 
=
\frac{1}{\snorm{\vecx - \vecq}{×}}
\quad \text{with}\quad \vecq = (0,0,0.25),
\]
and hence, the exact solution to the weakly singular integral
equation~\eqref{equ:Su f equation} is
$u(\vecx)
=
\partial_{\nu} U(\vecx) 
$; see e.g.~\cite{PhaTraLeG08}, i.e.,
\[
u(\vecx)
=
\frac{-1 + \vecx\cdot\vecq}{\snorm{\vecx -\vecq}{×}^3}
=
\frac{0.25 x_3 - 1}{(1.0625 - 0.5x_3)^{3/2}}.
\]
For the approximation of~\eqref{equ:Su f equation}, we use spherical radial
basis functions suggested by Wendland~\cite[page 128]{Wen95}. 
The sets $X:=\{\vecx_1,\vecx_2,\ldots,\vecx_N\}$ of points are chosen purely to
observe
the order of convergence. Experiments with real data can be found
in~\cite{PhaTraLeG08}.

The shape function $\phi:[-1,1]\goto\R$ which is used to define the kernel
$\Phi$ is given by
\begin{equation}\label{equ:rho m}
\phi(t)=\rho(\sqrt{2-2t}), 
\end{equation}
where $\rho$ is 
Wendland's functions \cite[page 128]{Wen05} defined
by
\[
\rho(r)=
(1-r)^2_{+}. 
\]
%
Narcowich and Ward \cite[Proposition 4.6]{NarWar02} prove that $\phe\sim
(1+\ell)^{-2\tau}$ for all $\ell\ge 0$, where $\tau = 3/2$. 
The spherical radial basis
functions $\Phi_i$, $i=1,\ldots,N$, are computed by
\begin{equation}\label{equ:Phi i def rho}
\Phi_i(\vecx)
=
\rho(\sqrt{2-2\vecx\cdot\vecx_i}),
\quad\vecx\in\mS.
\end{equation}

We first found an approximate solution 
$u_X^G\in\cV_X^\phi:= \spann\{\Phi_1,\Phi_2,\ldots,\Phi_N \}$ 
satisfying the Galerkin
equation
\begin{equation}\label{equ:weakly Galerkin equation}
a_S(u_X^G, v)
:=
\inprod{Su_X^G}{v}
=
\inprod{g}{v}
\quad
\forall
v\in\cV_X^\phi. 
\end{equation}
%
The stiffness matrix arising from~\eqref{equ:weakly Galerkin equation}
has entries given by 
\begin{align*}
a_S(\Phi_i,\Phi_j)
&
=
\sum_{\ell=0}^\infty
\frac{|{\phe}{}|^2}{2\ell+1}
\sum_{m = -\ell}^\ell
\Ylm(\vecx_i)
\Ylm(\vecx_j)
=
\frac{1}{4\pi}
\sum_{\ell=0}^\infty
|{\phe}{}|^2
P_\ell(\vecx_i\cdot\vecx_j).
\end{align*}
The right-hand side of~\eqref{equ:weakly Galerkin equation} is computed by 
using~\eqref{equ:g def 1s}, noting $\wth{D}(\ell) = -1/(4\ell+2)$
(see~\cite[page
122]{Ned00}), 
\begin{align*}
\inprod{g}{\Phi_i}
&
=
\sum_{\ell=0}^\infty
\sum_{m = -\ell}^\ell
\Big{(}
-\frac{1}{2} - \frac{1}{2(2\ell+1)}
\Big{)}
\wth{(U_D)}_{\ell,m} \phe \Ylm(\vecx_i)
\\
&
=
-
\sum_{\ell=0}^\infty
\sum_{m = -\ell}^\ell
\frac{(\ell+1)}{2\ell+1}
\wth{(U_D)}_{\ell,m}
\phe \Ylm(\vecx_i).
\end{align*}
The errors are computed by 
\begin{equation}\label{equ:Hminus1o2 error compute}
\norm{u - u_X^G}{-1/2}
=
\left(
\sum_{\ell=0}^{\infty}
\sum_{m=-\ell}^\ell
\frac{\snorm{\uhlm - \wth{(u_X^G)}_{\ell,m}}{×}^2}{\ell+1}
\right)^{1/2}.
\end{equation}
 
Our theoretical result (Theorem~\ref{the:Gal}) predicts an order of convergence
of $2\tau+1/2$ in the $H^{-1/2}$-norm. We carried out the
experiment and observed some agreement between the experimented orders of
convergence (EOC) and our theoretical results; see Tables~\ref{tab:Errors
Hminus12 m
0}.
 
\begin{table}[ht]
\caption{Galerkin method: Errors in $H^{-1/2}$-norm, $\tau = 1.5$.
Expected order of convergence : 3.5}
\begin{center}
\begin{tabular}{|c|c|c|c|}
\hline
       N   &    $h_X$      & $H^{-1/2}$-norm &      EOC       \\
\hline       
      20   &   0.65140   &   0.120349381   &           \\
      30   &   0.51210   &   0.054895875   &     3.262      \\
      40   &   0.44180   &   0.025612135   &     5.163      \\
      51   &   0.37500   &   0.015883257   &     2.915      \\
     101   &   0.26720   &   0.006082010   &     2.832      \\
     200   &   0.19420   &   0.001977985   &     3.520      \\
     500   &   0.12370   &   0.000492078   &     3.084      \\
\hline
\end{tabular}
\end{center}
\label{tab:Errors Hminus12 m 0}
\end{table}

The collocation solution $u_X^C\in \cV_X^\phi$ is 
found by solving 
\begin{equation}\label{equ:weakly collocation equation}
Su_X^C(\vecx_i)
=
g(\vecx_i),
\quad
i=1,\ldots, N.
\end{equation}
By writing $u_X^C = \sum_{i=1}^Nc_i\Phi_i$, we derive 
from~\eqref{equ:weakly collocation equation} the matrix
equation
\[
\vecS^C \vecc 
=
\vecg,
\]
where $\vecc = (c_i)_{i=1,\ldots, N}$, $\vecg = (g(\vecx_i))_{i=1,\ldots, N}$
and 
\[
\vecS^C_{ij}
=
S\Phi_i(\vecx_j) 
=
\sum_{\ell=0}^\infty
\sum_{m=-\ell}^\ell
 \frac{\phe}{2\ell+1}\Ylm(\vecx_i)\Ylm(\vecx_j),
\quad i,j =1,\ldots, N.
\]
By using the addition formula~\eqref{equ:add for}, we obtain 
\begin{equation}\label{equ:Aij Weakly}
\vecS_{ij}^C
=
\frac{1}{4\pi}
\sum_{\ell=0}^\infty
\phe
P_{\ell}(\vecx_i\cdot\vecx_j).
\end{equation}

The errors are then computed similarly as in~\eqref{equ:Hminus1o2 error
compute}. 
There is agreement between the experimented order of convergence
(EOC) and our theoretical result (which is $2\tau+1/2$);
 see Tables~\ref{tab:Errors
Hminus12 m 0 collocation}.

\begin{table}[ht]
\caption{Collocation method: Errors in $H^{-1/2}$-norm, $\tau = 1.5$.
Expected order of convergence : 3.5}
\begin{center}
\begin{tabular}{|c|c|c|c|}
\hline
       N   &    $h_X$      & $H^{-1/2}$-norm &      EOC       \\
\hline       
      20   &   0.65140   &   0.139479793   &               \\
      30   &   0.51210   &   0.047806025   &     4.450      \\
      40   &   0.44180   &   0.020666895   &     5.679      \\
      51   &   0.37500   &   0.011785692   &     3.426      \\
     101   &   0.26720   &   0.003674365   &     3.439      \\
     400   &   0.12370   &   0.000277996   &     3.352      \\
\hline
\end{tabular}
\end{center}
\label{tab:Errors Hminus12 m 0 collocation}
\end{table}

\section*{Acknowledgements}
The authors would like to thank Dr. Thong Le Gia for providing 
some parts of the code which are used in the numerical experiments.
The first author is supported by the University International
Postgraduate Award offered by the University of New South
Wales.
The second author is partially supported by the grant FRG PS17166.



\end{document}